\newtheorem{theorem}{Theorem}[section]
\newtheorem{lemma}[theorem]{Lemma}
\newtheorem{conjecture}{Conjecture}[section]
\theoremstyle{definition}
\numberwithin{equation}{section}
\begin{document}

\title {Unit roots of the unit root $L$-functions}

\author[L.P. Yang]{Liping Yang}
\address{School of Mathematical Sciences, Chengdu University of Technology, Chengdu 610059, P.R. China}
\email{yanglp2013@126.com}
\author[H.Zhang]{Hao Zhang}
\address{Shing-Tung Yau Center of Southeast University, Yifu Architecture Building, No.2, Sipailou, Nanjing 210096, P.R. China}
\email{zhanhgao@126.com}
\date{}
\maketitle

\begin{abstract}
Adolphson and Sperber characterized the unique unit root of $L$-function associated with toric exponential sums in terms of the $\mathcal{A}$-hypergeometric functions. For the unit root $L$-function associated with a family of toric exponential sums, Haessig and Sperber conjectured its unit root behaves similarly to the classical case studied by Adolphson and Sperber. Under the assumption of a lower deformation hypothesis, Haessig and Sperber proved this conjecture. In this paper, we demonstrate that Haessig and Sperber's conjecture holds in general.

\medskip
\noindent {\bf Key words:} unit root, $L$-function, Dwork trace formula, hypergeometric function.

\medskip
\noindent {\bf Mathematics Subject Classification:} 11T23, 11S40.

\end{abstract}

\section{Introduction}

Let $\mathbb F_q$ be the finite field of $q=p^a$ elements with characteristic $p$ and let $\bar{\mathbb{F}}_q$ be its algebraic closure.
Let
$$f(\Lambda,X)=\sum a_{r,u}\Lambda^{r}X^u\in \mathbb{F}_q[\Lambda_1^{\pm},\ldots,\Lambda_s^{\pm},X_1^{\pm},\ldots,X_n^{\pm}]$$ be a Laurent polynomial.
 Fix a primitive $p$-th root of unity $\zeta_p$.
 For each $\lambda\in (\bar{\mathbb{F}}_q^*)^{s}$ and positive integer $m$, define the toric exponential sum
$$S_m\big(f(\lambda,X)\big):=\sum_{X\in (\mathbb{F}_{q^{md(\lambda)}}^*)^n} \zeta_p^{{\rm Tr}_{\mathbb{F}_{q^{md(\lambda)}}/\mathbb{F}_q}(f(\lambda,X))},$$
where $d(\lambda)$ is the degree $[\mathbb{F}_q(\lambda): \mathbb{F}_q]$ and $\mathbb{F}_q(\lambda)$ is the  field extension  by adjoining every coordinate of $\lambda$ to $\mathbb{F}_q$.
It is well-known that the associated $L$-function
$$L\big(f(\lambda,X),T\big):=\exp\Big(\sum_{m\ge 1}\frac{S_m(f(\lambda,X))T^m}{m}\Big)$$
is a rational function and its roots and poles are algebraic integers. Much work has been done on the $p$-adic and absolute values of the roots and poles(see \cite{AS,De80,De91, Wan93}), which is closely related to the Weil conjecture.

Another direction is to express the roots and poles of $L$-functions in terms of special values of $\mathcal{A}$-hypergeometric functions. The $p$-adic hypergeometric differential equations arise because they describe the variation of $p$-adic cohomology of a parametrized family of exponential sums. Dwork \cite{Dw69} expressed the unit root of a non-supersingular elliptic curve $y^2=x(x-1)(x-\lambda)$ in terms of the Gaussian hypergeometric function $F(\frac{1}{2},\frac{1}{2},1;\lambda)$.
This analysis motivated Dwork's general study of $p$-adic periods. For the unit root of the family of Kloosterman sums $x+\lambda/x$, Dwork also established similar results using $p$-adic Bessel function \cite{Dw74}. Sperber\cite{SP0} generalized Dwork's work to the family of the $n$-dimensional Kloosterman sums.
Dwork and Loeser \cite{DL93} established a systematic correspondence between the families of varieties or exponential sums and hypergeometric equations.
When $f(\Lambda,X)$ is of the form $\sum\Lambda_u X^u$, Adolphson and Sperber \cite{AS2} presented an explicit $p$-adic analytic formula for the unique unit root of $L$-function $L(f(\lambda,X),T)$  in terms of $\mathcal{A}$-hypergeometric functions.

For each $\lambda\in (\bar{\mathbb{F}}_q^*)^s$, Adolphson and Sperber proved that $L(f(\lambda,X),T)$ has a unique unit root, denoted by $\pi_0(\lambda)$, which is a 1-unit. Let $\mathbb Q_p$ be the $p$-adic number field and $\mathbb{Z}_p$ be the ring of $p$-adic integers. The unit root $L$-function for this family is defined as
$$L_{unit}(f,\kappa,T):=\prod_{\lambda\in |\mathbb {G}_m^s/\mathbb{F}_q|}\frac{1}{1-\pi_{0}(\lambda)^{\kappa}T^{d(\lambda)}},$$
where $\kappa\in \mathbb{Z}_p$.
Dwork \cite{Dw73} conjectured that for each $\kappa\in \mathbb{Z}_p$, the unit root $L$-function $L_{unit}(f,\kappa,T)$ is $p$-adic meromorphic.
Wan \cite{Wan99,Wan00a,Wan00b} proved that the Dwork's conjecture holds. In other words, the unit root $L$-function $L_{unit}(f,\kappa,T)$ is a quotient of $p$-adic entire functions:
$$L_{unit}(f,\kappa,T)=\frac{\prod_{i=1}^{\infty}(1-\alpha_i(\kappa)T)}{\prod_{j=1}^{\infty}(1-\beta_j(\kappa)T)} $$
with $\alpha_i(\kappa),\beta_j(\kappa)\rightarrow 0$ as $i,j\rightarrow \infty$.
Little is known about the zeros and poles.
Using the $p$-adic hypergeometric functions, Haessig and Sperber \cite{HS17} also provided an explicit expression of the unit root of $L_{unit}(f,\kappa,T)$  when $f(\Lambda,X)$ satisfies a lower deformation hypothesis.

To describe the form of the unit root, we introduce the following notations. Let $F(Y,\Lambda,X)=\sum Y_{r,u}\Lambda^{r}X^u$ be a new polynomial obtained by replacing the coefficients of $f(\Lambda,X)$ with new variables $Y_{r,u}$. Let $\bar{\mathbb Q}_p$ be the algebraic closure of $\mathbb{Q}_p$. Choose $\pi\in \bar{\mathbb{Q}}_p$ satisfying $\pi^{p-1}=-p$ and write $$\exp \pi F(Y,\Lambda,X)=\sum g_{r,u}(Y)\Lambda^rX^u.$$ Let $$G(Y):=g_{0,0}(Y)/g_{0,0}(Y^p).$$
Based on Adolphson and Sperber's results, which show that  $g_{0,0}(Y)$ satisfies the $A$-hypergeometric system and $G(Y)$ converges on the closed polydisk $|Y_{r,u}|_p\le 1$, Haessig and Sperber \cite{HS17} conjectured that the unit root has the following form.
\begin{conjecture}\label{conj}
Let $\hat{a}_{r,u}$ be the Teichm\"{u}ller lift of $a_{r,u}$. Let $\hat{a}=(\hat{a}_{r,u})$ and $G(\hat{a})=G(Y)|_{Y=\hat{a}}$. The unit root $L$-function $L_{unit}(f,\kappa,T)$ has a unique unit root, which is given by
$$\big(G(\hat{a})G(\hat{a}^p)\cdots G(\hat{a}^{p^{a-1}})\big)^{\kappa}.$$
\end{conjecture}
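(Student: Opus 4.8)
The plan is, in effect, to run Dwork's unit-root analysis a second time — now for the rank-one unit-root $F$-crystal $\mathcal U$ of the family over the parameter torus $\mathbb G_m^s$ — which I carry out through a comparison with an Adolphson--Sperber exponential sum over the larger torus $\mathbb G_m^{s+n}$. The structural input, which we take from Adolphson--Sperber, is that the unit roots $\pi_0(\lambda)$ are the scalar Frobenius of such a crystal $\mathcal U$, that $g_{0,0}(Y)$ is exactly the $\mathcal A$-hypergeometric function computing $\mathcal U$, that $G(Y)=g_{0,0}(Y)/g_{0,0}(Y^p)$ is its unit-root normalisation, and hence that $L_{unit}(f,\kappa,T)=L(\mathcal U^{\otimes\kappa},T)$ (the $\kappa$-th tensor power being legitimate for $\kappa\in\mathbb Z_p$ because $\mathcal U$ is unit-root). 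The key observation is that the universal Laurent polynomial $F(Y,\Lambda,X)=\sum_w Y_wZ^w$ — writing $w=(r,u)$, $Z=(\Lambda,X)$ and $Z^w=\Lambda^rX^u$, so that the $Y_w$ are coefficients and the $Z$ are the $(s+n)$ torus variables — is precisely of the shape treated in \cite{AS2}, and the associated hypergeometric ratio is exactly the $G(Y)$ of the statement. Specialising the coefficients to the Teichm\"{u}ller lift $\hat a=(\hat a_w)$ of the coefficient vector $a$ of $f$ (so that $d(a)=1$), \cite{AS2} gives that the toric exponential sum $L$-function of $f$ over $\mathbb G_m^{s+n}$ is, up to sign, a polynomial possessing a unique unit root, equal to $\prod_{i=0}^{a-1}G(\hat a^{p^i})$.

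I would then transport this statement through the fibration $\mathbb G_m^{s+n}\to\mathbb G_m^s$ onto the parameters. Since only $H^n$ of a nondegenerate fibre is nonzero, the relative form of Dwork's trace formula computes the $L$-function of $f$ over $\mathbb G_m^{s+n}$ from the Frobenius action on the cohomology $H^\bullet(\mathbb G_m^s,\mathcal H)$ of the fibrewise crystal $\mathcal H$, whose rank-one slope-zero sub-crystal is $\mathcal U$, with $L(\mathcal U,T)=L_{unit}(f,1,T)$ essentially by definition. Inside the rational function $L(f,\mathbb G_m^{s+n},T)^{\pm1}=L(\mathcal U,T)\,L(\mathcal H/\mathcal U,T)$ we thus have a factorisation into two $p$-adically meromorphic pieces; since $\mathcal H/\mathcal U$ has all Newton slopes bounded below by a positive constant (the fibrewise unit root being simple), a $q$-adic estimate on the traces of $\mathcal H/\mathcal U$ shows that $L(\mathcal H/\mathcal U,T)$ has no unit reciprocal zero or pole, so the slope-zero part of $L_{unit}(f,1,T)$ coincides with that of $L(f,\mathbb G_m^{s+n},T)^{\pm1}$, namely the single factor carrying $\prod_{i=0}^{a-1}G(\hat a^{p^i})$. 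For $\kappa=m\in\mathbb Z_{\ge1}$, run the same argument with $f(\Lambda,X)$ replaced by the $m$-fold relative sum $f(\Lambda,X^{(1)})+\dots+f(\Lambda,X^{(m)})$ over $\mathbb G_m^{s+mn}$: its fibrewise unit-root crystal is $\mathcal U^{\otimes m}$ by the K\"{u}nneth formula, so the slope-zero part of its $L$-function is that of $L_{unit}(f,m,T)$, while its hypergeometric function factors as $\prod_j g_{0,0}(Y^{(j)})$, whence the unit root is $\bigl(\prod_{i=0}^{a-1}G(\hat a^{p^i})\bigr)^{m}$. Finally $L_{unit}(f,\kappa,T)=L(\mathcal U^{\otimes\kappa},T)$ varies $p$-adically continuously in $\kappa\in\mathbb Z_p$ by Wan's work \cite{Wan99,Wan00a,Wan00b}, and $\bigl(\prod_{i=0}^{a-1}G(\hat a^{p^i})\bigr)^{\kappa}$ is continuous in $\kappa$ because each factor is a $1$-unit and $G$ converges on $\{|Y_{r,u}|_p\le1\}$; agreement on the dense set $\mathbb Z_{\ge1}$ then propagates to all of $\mathbb Z_p$.

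The substantive work — and the point at which the argument must improve on \cite{HS17} — is the uniform control needed to run the slope comparison and the passage to $\mathbb G_m^{s+n}$ over the whole parameter torus rather than over a generic part of it. One must know that $\mathcal H$ genuinely splits off a rank-one unit-root sub-crystal with $L$-function $L_{unit}(f,1,T)$, that the second Newton slope of the family is bounded away from zero uniformly in $\lambda\in(\bar{\mathbb F}_q^*)^s$, and that \cite{AS2} may be fed the polynomial $f$ over $\mathbb G_m^{s+n}$ (if necessary through a nondegenerate perturbation together with a limiting argument). All of this reduces to a uniform positive lower bound on the non-unit Newton slopes of the completely continuous Dwork operator of the family, established directly from the Dwork trace formula in place of the lower deformation hypothesis that \cite{HS17} used to circumvent it. I expect this uniform slope estimate to be the principal obstacle; once it is in hand, the continuity in $\kappa$ and the bookkeeping with $G$ should be comparatively routine.
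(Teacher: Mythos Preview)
Your strategy differs substantially from the paper's, and the step for $\kappa=m\ge 2$ contains a genuine gap. You assert that for the $m$-fold relative sum $f^{(m)}(\Lambda,X^{(1)},\dots,X^{(m)})=\sum_{j=1}^m f(\Lambda,X^{(j)})$ the associated hypergeometric series factors as $\prod_j g_{0,0}(Y^{(j)})$. This is false in general, precisely because the parameter variable $\Lambda$ is \emph{shared} across the $m$ copies. Writing $\exp\pi F(Y^{(j)},\Lambda,X^{(j)})=\sum_{s,v} g_{s,v}(Y^{(j)})\Lambda^s (X^{(j)})^v$ and multiplying, the constant coefficient in $(\Lambda,X^{(1)},\dots,X^{(m)})$ is
\[
g_{0,0}^{(m)}(Y^{(1)},\dots,Y^{(m)})\;=\;\sum_{s_1+\cdots+s_m=0}\ \prod_{j=1}^m g_{s_j,0}(Y^{(j)}),
\]
which contains cross terms whenever some $g_{s,0}$ with $s\ne 0$ is nonvanishing. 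For instance, with $f=a_1\Lambda X+a_2\Lambda X^{-1}+a_3\Lambda^{-1}X+a_4\Lambda^{-1}X^{-1}$ one has $g_{2,0}=\pi^2 Y_1Y_2+\cdots\ne 0$ and $g_{-2,0}=\pi^2 Y_3Y_4+\cdots\ne 0$, so $g_{0,0}^{(2)}\ne g_{0,0}(Y^{(1)})g_{0,0}(Y^{(2)})$. Consequently \cite{AS2} applied to $f^{(m)}$ gives the unit root in terms of $G^{(m)}$, not $G^m$, and you have no independent argument for $G^{(m)}(\hat a,\dots,\hat a)=G(\hat a)^m$. If instead you separate the parameters and use $\sum_j f(\Lambda^{(j)},X^{(j)})$, the factorisation does hold, but the fibrewise crystal is then $\mathcal U^{\boxtimes m}$ over $(\mathbb G_m^s)^m$ rather than $\mathcal U^{\otimes m}$ over $\mathbb G_m^s$, and you recover only the $m$-th power of the $\kappa=1$ unit root of $L(f,\mathbb G_m^{s+n},T)$, not the unit root of $L_{unit}(f,m,T)$. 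Either way the bridge from $\kappa=1$ to a dense set of integers collapses, and with it the continuity argument.

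There is also a secondary gap: you appeal to Wan's work for $p$-adic continuity of $L_{unit}(f,\kappa,T)$ in $\kappa$, but \cite{Wan99,Wan00a,Wan00b} establish meromorphy for each fixed $\kappa$, not uniform continuity of the slope-zero factor; and you would still need to know, for every $\kappa\in\mathbb Z_p$, that a \emph{unique} unit root exists before you can pass to the limit. The paper handles all of this at once by a different mechanism: it builds an infinite symmetric power space $\mathcal S_0(\mathcal O_0)$ with a nuclear operator $[\beta_a]_\kappa$ (and its dual $[\beta_a^*]_\kappa$) depending explicitly on $\kappa$, proves a Dwork trace formula for it, and then exhibits a concrete eigenvector $\Upsilon(\eta(\hat a,\Lambda,X))^{\kappa}$ of $[\beta_a^*]_\kappa$ with eigenvalue $\mathcal F_a(\hat a)^{\kappa}$. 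The $\kappa$-th power appears directly in the eigenvector and eigenvalue, so no K\"unneth or density step is needed; and the new $p$-adic Banach spaces (indexed by $|\cdot|$ rather than by a Newton weight) are what replace the lower deformation hypothesis of \cite{HS17}.
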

Haessig and Sperber\cite{HS17} proved this conjecture holds when $f(\Lambda,X)$ satisfies the lower deformation hypothesis, which is, if we write $f(\Lambda,X)=g(X)+h(\Lambda,X)$,
the $X$-support of $h(\Lambda,X)$ strictly lies in the Newton polyhedron of $X$-support of $g(X)$. Note that the Kloosterman case does not satisfy the lower deformation condition.

To avoid the lower deformation hypothesis, we define the $p$-adic Banach spaces on which the Frobenius maps act through absolute values rather than weight functions. However, it remains unknown whether the Dwork trace formulas hold in these new $p$-adic Banach spaces.
In this paper, one of our main work is to prove the Dwork trace formulas still hold in these new spaces, including Dwork's dual theory.
Finally, we will show that the conjecture holds in general.

\begin{theorem}\label{thm24}
For a Laurent polynomial $f(\Lambda,X)=\sum a_{r,u}\Lambda^{r}X^u\in \mathbb{F}_q[\Lambda_1^{\pm},\ldots,\Lambda_s^{\pm},X_1^{\pm},\ldots,X_n^{\pm}]$, Conjecture \ref{conj} holds.
\end{theorem}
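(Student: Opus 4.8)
The plan is to run Dwork's $p$-adic cohomological machinery for the \emph{family} of toric exponential sums, but on the new Banach spaces in which the Frobenius acts via absolute-value estimates rather than the traditional weight (Newton-polyhedron) filtration, thereby removing the lower deformation hypothesis. First I would set up the relevant Banach spaces: a space $\mathcal{B}$ of power series in the $X$-variables (and in the parameter variables $\Lambda$, or rather their Teichm\"uller lifts) whose coefficients satisfy the norm bound coming from $|Y_{r,u}|_p\le 1$ and the radius of convergence of $\exp\pi F$, and verify that the splitting function $\theta(\pi t)=\exp(\pi t-\pi t^p)$ and the products $g_{r,u}$ live in this space. The crucial preliminary is to re-establish the Dwork trace formula in this setting: that the exponential sum $S_m(f(\lambda,X))$ equals (up to the usual $(q^{md}-1)^n$ normalization) an alternating sum of traces of $\alpha^m$, where $\alpha=\psi_q\circ(\text{multiplication by }\exp\pi F(\hat a,\Lambda,X))$ composed with the Dwork operator, acting on the new space and its Koszul complex. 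I would prove this by following the classical derivation (additive character sum $\to$ integral over the torus $\to$ trace), checking at each step that the operators are completely continuous on the new Banach space and that the Koszul/cohomology boiling-down still produces a single nonzero cohomology group, so that $L(f(\lambda,X),T)^{(-1)^{n-1}}$ is the characteristic polynomial of $\alpha$ on $H^n$. Simultaneously I would prove the dual statement (Dwork's dual theory): the transpose operator $\alpha^*$ acting on the dual Banach space has the same nonzero eigenvalues, which is what lets one extract the unit root as a genuine eigenvalue rather than just a factor.

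Next, having the trace formula in hand for the whole family, I would identify the unit-root part. For each closed point $\lambda$, Adolphson--Sperber already give $\pi_0(\lambda)$ as a special value of the $\mathcal{A}$-hypergeometric series; I would reprove this inside the new formalism by showing that $\alpha$ (after the appropriate Frobenius descent from $\mathbb{F}_{q^d}$ to $\mathbb{F}_q$, i.e. the $a$-th or $ad$-th iterate) has a unique eigenvalue that is a $p$-adic unit, and that the corresponding eigenvector is the series $g_{0,0}(Y)$ evaluated at Teichm\"uller points — this is exactly where the factor $G(Y)=g_{0,0}(Y)/g_{0,0}(Y^p)$ enters, since the $p$-power Frobenius semilinearity turns the eigenvalue equation into a telescoping product $G(\hat a)G(\hat a^p)\cdots G(\hat a^{p^{a-1}})$ over one Frobenius orbit. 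Then, to pass from individual unit roots to the unit root \emph{$L$-function} $L_{unit}(f,\kappa,T)$, I would invoke Wan's proof of Dwork's conjecture (the unit root $L$-function is $p$-adic meromorphic, a quotient of entire functions) to know a priori that $L_{unit}(f,\kappa,T)$ has a well-defined unit-root factor, and then use the family version of the trace formula to express that factor: the operator whose characteristic series computes $L_{unit}(f,\kappa,T)$ is built from the $\kappa$-th symmetric-power-type construction applied to the unit-root subspace of the family Frobenius, and its unique unit eigenvalue is precisely the $\kappa$-th power of the orbit product $\big(G(\hat a)G(\hat a^p)\cdots G(\hat a^{p^{a-1}})\big)$. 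Identifying this eigenvector again as (a $\kappa$-twist of) the $\mathcal{A}$-hypergeometric series $g_{0,0}$ restricted to Teichm\"uller points — using Adolphson--Sperber's result that $g_{0,0}(Y)$ solves the $\mathcal{A}$-hypergeometric system and $G(Y)$ converges on $|Y_{r,u}|_p\le 1$ — pins down the unit root as the asserted value and establishes uniqueness.

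The main obstacle I expect is the first step: verifying that the Dwork trace formula and, especially, Dwork's dual theory remain valid on the new Banach spaces. In the classical setup the weight function gives a clean grading that forces complete continuity of the Frobenius and controls the Koszul complex; replacing it by a crude absolute-value norm ($|Y_{r,u}|_p\le 1$) loses that grading, so one must show by direct estimates that the operator $\alpha$ is still nuclear (its matrix entries decay $p$-adically), that the boundary maps of the Koszul complex remain strict with closed image, and that the spectral theory (characteristic series, Fredholm determinants, duality between $H^n$ and $H^n$ of the dual complex) goes through unchanged. A secondary technical point is handling the parameter variables $\Lambda$ uniformly over the whole family rather than one $\lambda$ at a time, and making the Frobenius descent over $\mathbb{F}_q$ compatible with the $\kappa$-twisting that defines $L_{unit}$; but these are bookkeeping once the Banach-space foundations and the trace/dual formulas are secured.
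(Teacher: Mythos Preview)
Your proposal captures the paper's essential strategy: define Banach spaces via absolute-value norms rather than Newton-polyhedron weights, establish the Dwork trace formula and its dual on these spaces, build a $\kappa$-twisted symmetric-power operator whose Fredholm determinant controls $L_{unit}$, and exhibit (a normalization of) $g_{0,0}$ as an explicit eigenvector of the dual operator with eigenvalue the telescoping product $G(\hat a)\cdots G(\hat a^{p^{a-1}})$.

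The paper's implementation differs from your sketch in a few technical respects worth noting. It does not argue via Koszul-complex strictness or cohomological concentration; instead it works directly with an infinite symmetric power space $\mathcal{S}_0(\mathcal{O}_0)$ (formal power series in variables $e_u$ indexed by $M_2\setminus\{0\}$) and its dual $\mathcal{S}_0^*(\mathcal{O}_0^*)$, defining $[\beta_a]_\kappa$ and $[\beta_a^*]_\kappa$ on these. The equality $\det(1-[\beta_a]_\kappa T)=\det(1-[\beta_a^*]_\kappa T)$ is obtained not by abstract duality but by approximating both operators as limits of operators $[\beta_a]_{(k_l)}$, $[\beta_a^*]_{(k_l)}$ on \emph{finite} symmetric powers (where the adjointness pairing is elementary), with $k_l\to\kappa$ $p$-adically and $k_l\to\infty$ archimedeanly. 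Uniqueness of the unit root is read off directly from the matrix estimate for $[\beta_a]_\kappa$ (Lemma~3.3 forces every entry except the $(0,\mathbf{0})$-diagonal to be divisible by $\tilde\pi$), so Wan's meromorphy theorem is not needed for this step. Finally, the eigenvector on the dual side is not $g_{0,0}$ itself but $\Upsilon(\eta)^\kappa$ where $\eta = \mathrm{Pr}_0(\exp\pi F)/J_{0,0}(Y)$, and a projection argument $\mathrm{Pr}_1$ kills the error terms arising from the boundary of the cone $\delta_1$.
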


By comparing Theorem 1.3 in \cite{AS2} and Theorem \ref{thm24}, we observe that the unit root of the unit root $L$-function behaves similarly to the unit root of the family of exponential sums. In fact, this result generalizes the classical case of the $L$-function for toric exponential sums.
Moreover, Haessig \cite{HS17} showed that the unit root $L$-function of one-dimensional Kloosterman family has a unit root $T=1$. Similarly, the authors \cite{YZ23} demonstrated that $T=1$ is the unit root of the unit root $L$-function for the $n$-dimensional Kloosterman family. We can conclude the same results from Theorem \ref{thm24} for the Kloosterman case.

This paper is organized as follows. In Section 2, we introduce the Dwork trace formula for the toric exponential sum. In Section 3, we prove the unit root $L$-function has one unique unit root by proving the Dwork trace formula for the infinity symmetric power $L$-function. In Section 4, we
introduce the dual theory for the infinity symmetric power $L$-function. Finally, in Section 5, we prove our main result, Theorem \ref{thm24}.

\section{The exponential sums}

In this section, we prove the Dwork trace formulas for exponential sums.
Consider the polynomial $f(X)=\sum_{ u\in A}\lambda_{u} X^{ {u}}\in \mathbb F_q[X_1^{\pm },\cdots,X_n^{\pm }]$, where $A$ is a finite subset of $\mathbb Z^n$ and $X^{ {u}}=X_1^{u_1}\cdots X_n^{u_n}$ for $ u=(u_1,\cdots,u_n)$.
For a positive integer $m$, the exponential sum associated to $f(X)$ is defined by
$$S_m\big(f(X)\big)= \sum_{X\in (\mathbb F_{q^m}^\ast)^n} \zeta_p^{\mathrm{Tr}_{\mathbb F_{q^m}/\mathbb F_q}(f(X))}.$$
The generating $L$-function associated with $S_m(f(X))$ is defined by
$$L\big(f(X),T\big)=\exp\Big(\sum_{m=1}^\infty S_m\big(f(X)\big)\frac{T^m}{m}\Big).$$

Let $\mathbb Q_q$ be the unramified extension of $\mathbb Q_p$ with residue field $\mathbb F_q$. Let $\delta_A$ be the cone in $\mathbb{R}^n$ generated by $A$.
Let $\omega=\max\{\vert u\vert: u\in A\}$. Choose  $\tilde{\pi}\in \overline{\mathbb Q}_p$ such that $0<\text{ord}_p\tilde{\pi}\le \frac{p-1}{p^2\omega}$. Recall that $\pi\in \overline{\mathbb Q}_p$ satisfies $\pi^{p-1}=-p$.
 Let $K=\mathbb Q_q(\tilde{\pi},\pi)$ and $R$ be the integer ring of $K$.

Define the following $p$-adic space
$$\mathcal C_0=\big\{\sum_{ u\in \delta_A\cap\mathbb Z^n}a_{ u}\tilde{\pi}^{| u|}X^{ u} : a_u\in K, \vert a_{ u}\vert \to 0 \text{ as }  u\to \infty\big\}.$$
It is a Banach space with respect to the norm
$$\Vert\sum_{ u\in \delta_A\cap\mathbb Z^n} a_{ u}\tilde{\pi}^{| u|}X^{ u} \Vert=\sup_{ u\in \delta_A\cap\mathbb Z^n} \vert a_{ u} \vert.$$
For $s\in \mathbb{R}$ and $s>1$, define
$$L(s):=\big\{\sum_{ u\in \delta_A\cap\mathbb Z^n}a_{ u}X^{ u} : a_u\in K, \vert a_{ u}\vert s^{\vert u\vert}\text{ is bounded} \big\}.$$
It is a Banach space with respect to the norm
$$\Vert\sum_{ u\in \delta_A\cap\mathbb Z^n} a_{ u}X^{ u} \Vert=\sup_{u\in \delta_A\cap\mathbb Z^n} \vert a_{ u} \vert s^{\vert u\vert}.$$
For $s=p^{\text{ord}_p\tilde{\pi}}$ and $s'>s$, we have
$$L(s')\subset \mathcal{C}_0 \subset L(s).$$

Let $\Theta(T)$ be the Dwork splitting function defined by $$\Theta(T):=\exp \pi(T-T^p)=\sum_{i=0}^{\infty}\theta_iT^i.$$
It is well-known that ${\rm ord}_p \theta_i\ge \frac{(p-1)i}{p^2}$ for all $i\ge 0$.
Let $\hat{f}(X)=\sum_{ u\in A}\hat\lambda_{u} X^{ {u}}\in K[X^{\pm}]$ be the Techim\"{u}ller lift of $f(X)$ and $H(X):=\exp\pi \big(\hat{f}(X)-\hat{f}(X^p)\big)$.

\begin{lemma}\label{lem01}
We have that
$$H(X)\in L(p^{\frac{p-1}{p^2\omega}}).$$
\end{lemma}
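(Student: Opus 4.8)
The plan is to write $H(X) = \exp\pi\big(\hat f(X) - \hat f(X^p)\big)$ as an infinite product of Dwork splitting functions evaluated at the monomials of $\hat f$, and then estimate the coefficients using the known growth of the $\theta_i$. Concretely, since $\hat f(X) = \sum_{u\in A}\hat\lambda_u X^u$, we have $H(X) = \prod_{u\in A}\Theta(\hat\lambda_u X^u) = \prod_{u\in A}\sum_{i\ge 0}\theta_i\hat\lambda_u^{\,i}X^{iu}$. Because $|\hat\lambda_u|\le 1$ (Teichm\"uller lifts are roots of unity or $0$), each factor has coefficients of absolute value at most $|\theta_i|\le p^{-(p-1)i/p^2}$ on the monomial $X^{iu}$, whose degree is $i|u|$ with $|u|\le\omega$, so $i\ge |iu|/\omega$.

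The key estimate: a general monomial $X^w$ appearing in $H(X)$ comes from choosing nonnegative integers $i_u$ for $u\in A$ with $\sum_u i_u u = w$, contributing a term with $p$-adic valuation at least $\sum_u \frac{(p-1)i_u}{p^2} = \frac{p-1}{p^2}\sum_u i_u \ge \frac{p-1}{p^2}\cdot\frac{\sum_u i_u|u|}{\omega} \ge \frac{p-1}{p^2\omega}|w|$, using $\sum_u i_u|u| \ge |\sum_u i_u u| = |w|$ since all $u\in A$ lie in the cone $\delta_A$ (so there is no cancellation in the norm, or more simply $|w| = |\sum i_u u| \le \sum i_u|u|$). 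Summing over the finitely many ways to write $w$ as such a combination only improves or preserves the ultrametric bound, so the coefficient $c_w$ of $X^w$ in $H(X)$ satisfies $\mathrm{ord}_p(c_w) \ge \frac{p-1}{p^2\omega}|w|$, i.e. $|c_w| \le \big(p^{(p-1)/(p^2\omega)}\big)^{-|w|}$. Hence $|c_w|\,s^{|w|}$ is bounded (by $1$) for $s = p^{(p-1)/(p^2\omega)}$, which is exactly the statement $H(X)\in L\big(p^{(p-1)/(p^2\omega)}\big)$. One also notes that every $w$ arising this way lies in $\delta_A\cap\mathbb Z^n$, as required by the definition of $L(s)$.

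The main obstacle is bookkeeping rather than conceptual: one must handle the fact that $A$ may be infinite-free only in the sense that it is finite, but the product $\prod_{u\in A}\Theta(\hat\lambda_u X^u)$ is a genuine (finite) product whose expansion still involves infinitely many monomials, so convergence of the rearrangement and the interchange of product and sum must be justified in the Banach space $L(s)$ — this follows because each partial product lies in $L(s)$ with norm $\le 1$ and the sequence is Cauchy. A secondary point to be careful about is the direction of the inequality $\mathrm{ord}_p\tilde\pi \le \frac{p-1}{p^2\omega}$ versus the exponent appearing here: the lemma asserts membership in $L(p^{(p-1)/(p^2\omega)})$, which is the \emph{smallest} such space among the $L(s)$ with $s \le p^{(p-1)/(p^2\omega)}$, and this is precisely what the valuation bound $\ge \frac{p-1}{p^2\omega}|w|$ delivers; no stronger decay is claimed, and indeed the bound $\mathrm{ord}_p\theta_i \ge \frac{(p-1)i}{p^2}$ is sharp enough to give exactly this and no more.
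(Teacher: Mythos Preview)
Your proof is correct and follows essentially the same route as the paper: factor $H(X)=\prod_{u\in A}\Theta(\hat\lambda_u X^u)$, expand, and bound the valuation of the coefficient of $X^w$ via $\mathrm{ord}_p\,\theta_{i_u}\ge\frac{(p-1)i_u}{p^2}$ together with $\sum_u i_u\ge\frac{1}{\omega}\sum_u i_u|u|\ge\frac{|w|}{\omega}$. The paper's argument is slightly terser (it does not discuss convergence of the rearranged product), but the computation and the key chain of inequalities are identical.
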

\begin{proof}
Write $$H(X)=\prod_{u\in A}\Theta(\hat{\lambda}_uX^u)=\prod_{u\in A}\sum_{i_u=0}^{\infty}\theta_{i_u}(\hat{\lambda}_uX^u)^{i_u}.$$
If we write $H(X)=
\sum_{v\in \delta_A\cap \mathbb{Z}^n}b_{v}X^{v},$
then $b_{v}=\sum_{(i_u)\in S_v}\prod_{u\in A}\theta_{i_u}\hat{\lambda}_u^{i_u}, $
where
\begin{equation}\label{eq3}
S_v=\big\{(i_u)_{u\in A}:\sum_{u\in A}u\cdot i_u=v\big\}.
\end{equation}
Hence
$${\rm ord}_pb_{v}\ge \min_{(i_u)\in S_v}\frac{p-1}{p^2}\sum_{u\in A} |i_{u}|\ge \min_{(i_u)\in S_v}\frac{(p-1)\big(\sum_{u\in A} |i_{u}||u|\big)}{p^2\omega}.$$
It follows that
$$ {\rm ord}_pb_{v}\ge \frac{(p-1)|v|}{p^{2}\omega}.$$
Hence $H(X)\in L(p^{\frac{p-1}{p^2\omega}})$. Then we prove Lemma \ref{lem01}.
\end{proof}

Consider the map defined by
$$\psi\big(\sum_{v}a_{v}X^{v}\big)=\sum_{v}a_{p v}X^{ v}.$$
One checks that $\psi(L(s))\subset L(s^p)$.

 Now we define the Frobenius map $\alpha_1$ and $\alpha_a$ acting on $\mathcal{C}_0$ by $\alpha_1=\psi\circ H(X)$ and $\alpha_a=\alpha_1^a$. Since $0<\text{ord}_p\tilde{\pi}\le \frac{p-1}{p^2\omega}$, we have $H(X)\in L(p^{\frac{p-1}{p^2\omega}})\subset L(p^{\text{ord}_p\tilde{\pi}})$. Hence The map $\alpha_1$ is a composition of the following maps:

$$\mathcal C_0\hookrightarrow L(p^{\text{ord}_p\tilde{\pi}})\stackrel{H(X)}\to  L(p^{\text{ord}_p\tilde{\pi}})\stackrel{\psi}\to L(p^{p\cdot\text{ord}_p\tilde{\pi}})\hookrightarrow \mathcal C_0.$$

One verifies the last map is completely continuous. Hence the Frobenius map $\alpha_1$ is completely continuous, and so is $\alpha_a$. We could also verify this fact directly.
It follows from Lemma \ref{lem01} that we can write $H(X)=\sum_{ v}b_vX^{ v}$ with
$\vert b_{ v}\vert  p^{\vert v\vert\text{ord}_p\tilde{\pi}}\le 1$.We have

$$\alpha_1\big(\tilde\pi^{\vert  u\vert} X^{ u}\big)=\psi\big(\sum b_{ v}\tilde\pi^{\vert  u\vert} X^{ u+ v}\big)=\psi\big(\sum b_{ v- u}\tilde\pi^{\vert  u\vert} X^{ v}\big)=\sum b_{p v- u}\tilde\pi^{\vert  u\vert-\vert  v\vert} \tilde\pi^{\vert  v\vert} X^{ v}.$$
and
$$\vert b_{p v- u} \vert\cdot \vert \tilde\pi \vert^{\vert  u\vert-\vert  v\vert}\le  p^{-\big(\vert p v- u\vert+\vert  u\vert-\vert  v\vert\big)\text{ord}_p\tilde{\pi}}\le  p^{-(p-1)\vert  v\vert\text{ord}_p\tilde{\pi}}.$$

Since the coefficient $b_{p v- u}\tilde\pi^{\vert  u\vert-\vert  v\vert}$ tends to $0$ as $\vert  v\vert \to 0$, which is independent of $ u$, we have $\alpha_1$ is a completely continuous operator of $\mathcal C_0$. It follows from \cite[Theorem 6.10]{M1} that $\alpha_1$ and $\alpha_a$ are nuclear. As the Banach space $\mathcal C_0$ has a countable orthonormal basis $\{\tilde{\pi}^{| u|}X^{ u}\}$, by \cite[Theorem 6.10]{M1} and \cite[Equation 1.19]{AS}, we have the following Dwork trace formula
\begin{equation}\label{trace}
L\big(f(X),T\big)^{(-1)^{n+1}}=\det(1-\alpha_aT|_{\mathcal{C}_0})^{\delta^n},\end{equation}
where $g(T)^{\delta}=g(T)/g(qT)$.

\section{Family of exponential sums}

In  what follows, we consider the following Laurent polynomial
$$f(\Lambda,X)=\sum a_{r,u}\Lambda^{r}X^{u} \in \mathbb{F}_q[\Lambda_1^{\pm},\cdots,\Lambda_s^{\pm}, X_1^{\pm},\cdots,X_n^{\pm}],$$
here we use multiple index:
$$\Lambda^r=\Lambda_1^{r_1}\cdots\Lambda_s^{r_s}, X^{u}=X_1^{u_1}\cdots X_n^{u_n}.$$
Let $\text{supp }f:=\{(r,u)\in \mathbb{Z}^{s+n}:a_{r,u}\neq 0\}$.
Let $\delta_1$ be the cone in $\mathbb{R}^s$ generated by
$\{r\in \mathbb Z^s: {\rm there\ exists} \  u\in \mathbb Z^n \text{ such that }(r,u)\in \text{supp }f\},$
$\delta_2$ be the cone in $\mathbb{R}^n$ generated by
$\{u\in \mathbb Z^n: {\rm there\ exists} \  r\in \mathbb Z^s \text{ such that }(r,u)\in \text{supp }f\}.$ Let $M_1=\delta_1\cap \mathbb{Z}^s$ and $M_2=\delta_2\cap \mathbb{Z}^n$. Let
$$\omega_1=\max\{\vert r\vert \in \mathbb Z^s: {\rm there\ exists} \   u\in \mathbb Z^n \text{ such that }(r,u)\in \text{supp }f\}$$
and
$$\omega_2=\max\{\vert u\vert \in \mathbb Z^n: {\rm there\ exists} \  r\in \mathbb Z^s \text{ such that }(r,u)\in \text{supp }f\}.$$

Now fix $\tilde{\pi}\in \overline{\mathbb Q}_p$ such that $\text{ord}_p\tilde{\pi}=\frac{p-1}{p^2(\omega_1+\omega_2)}$.
For integer $m\geq 0$, set
$$\mathcal{O}_{0,p^m}:=\{\sum_{r\in M_1} c(r)\tilde{\pi}^{\frac{\vert r\vert}{p^m}}\Lambda^{r}: c(r)\in R,c(r)\rightarrow  0\ {\rm as}\  \vert r\vert\rightarrow\infty \}.$$
For $\xi=\sum_{r\in M_1} c(r)\tilde{\pi}^{\frac{\vert r\vert}{p^m}}\Lambda^{r}\in \mathcal{O}_{0,p^m}$, its norm is given by
$$\vert \xi\vert= \sup_{r\in M_1}|c({r})|.$$

Similarly, we define the $\mathcal{O}_{0,p^m}$-algebra

$$\mathcal{C}_0(\mathcal{O}_{0,p^m}):=\{\sum_{u\in M_2} \zeta({u})\tilde{\pi}^{|{u}|}X^{{{u}}}: \zeta({u})\in \mathcal{O}_{0,p^m},\zeta({u})\rightarrow  0\ {\rm as}\  |{u}|\rightarrow\infty \}$$
with norm

$$|\sum_{u\in M_2} \zeta({u})\tilde{\pi}^{|{u}|}X^{{{u}}}|:=\sup_{{u}\in M_2}|\zeta({u})|.$$
 For $m=0$, we simply write $\mathcal{O}_{0,1}$ by $\mathcal{O}_{0}$, and $\mathcal{C}_0(\mathcal{O}_{0,1})$ by $\mathcal{C}_0(\mathcal{O}_{0})$.

For $s_1,s_2\in \mathbb{R}$ such that $s_1>1$ and $s_2>1$, we define the following Banach space:

$$K(s_1,s_2)=\big\{\sum_{r\in M_1, u\in M_2}B_{r,u}\Lambda^r X^u: B_{r,u}\in K \ \text{and}\ \{\vert B_{r,u}\vert s_1^{\vert r\vert}s_2^{\vert u\vert}\}_{r,u} \text{ is bounded}\big\}.$$
Then we have that $$\mathcal{C}_0(\mathcal{O}_{0})\subset K(p^{\frac{p-1}{p^2(\omega_1+\omega_2)}},p^{\frac{p-1}{p^2(\omega_1+\omega_2)}}).$$

\subsection{Frobenius}
Let $\hat{f}(\Lambda,X)\in K[\Lambda^\pm,X^\pm]$ be the Teichm\"{u}ller lifting of $f(\Lambda,X)$ and let
$$H_m(\Lambda,X):=\exp\pi \big(\hat{f}(\Lambda, X)-\hat{f}(\Lambda^{p^m}, X^{p^m})\big).$$
As in Lemma \ref{lem01}, one verifies that
\begin{equation}\label{eq1029}
H_m(\Lambda, X)\in K(p^{\frac{p-1}{p^{m+1}(\omega_1+\omega_2)}}, p^{\frac{p-1}{p^{m+1}(\omega_1+\omega_2)}}).
\end{equation}

Let $\psi$ be the map acting on power series by
$$\psi(\sum B_{r,u}\Lambda^r X^u)=\sum B_{r,pu}\Lambda^r X^u.$$
One checks that
$$\psi^m\big(K(s_1,s_2)\big)\subset K(s_1,s_2^{p^m}).$$
Define the Frobenius map
$$\alpha_{m,\Lambda}:=\psi^m\circ H_m(\Lambda, X).$$
\begin{lemma}\label{lem1}
For $m\ge 1$, the map $\alpha_{m,\Lambda}$ maps $\mathcal{C}_0(\mathcal{O}_0)$ into $\mathcal{C}_0(\mathcal{O}_{0,p^m})$, and for $u\in M_2$, we have that
$$|\alpha_{m,\Lambda}(\tilde{\pi}^{|u|}X^{u}) |\le  |\tilde{\pi}|^{(1-\frac{1}{p^m})|u|}.$$
\end{lemma}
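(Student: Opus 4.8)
The plan is to imitate the argument behind Lemma \ref{lem01} and the computation of $\alpha_1$ in Section 2, but now keeping careful track of the bookkeeping in both the $\Lambda$-variables and the $X$-variables, and of the shift from $\mathcal{O}_0$ to $\mathcal{O}_{0,p^m}$ caused by the Frobenius twist $\Lambda\mapsto\Lambda^{p^m}$. First I would write $H_m(\Lambda,X)=\sum_{r,u} b_{r,u}\Lambda^r X^u$; by \eqref{eq1029} the coefficients satisfy $|b_{r,u}|\,p^{(|r|+|u|)\,\frac{p-1}{p^{m+1}(\omega_1+\omega_2)}}\le 1$, and since $\operatorname{ord}_p\tilde\pi=\frac{p-1}{p^2(\omega_1+\omega_2)}\ge \frac{p-1}{p^{m+1}(\omega_1+\omega_2)}$ for $m\ge1$, we get the cleaner bound $|b_{r,u}|\le |\tilde\pi|^{(|r|+|u|)/p^m}$. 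Next I compute, exactly as in Section 2,
$$\alpha_{m,\Lambda}(\tilde\pi^{|u|}X^u)=\psi^m\Big(\sum_{r',v} b_{r',v-u}\,\tilde\pi^{|u|}\Lambda^{r'}X^{v}\Big)=\sum_{r',v} b_{r',\,p^m v-u}\,\tilde\pi^{|u|}\,\Lambda^{r'}X^{v},$$
so the coefficient of $\Lambda^{r'}X^v$ is $b_{r',\,p^m v-u}\,\tilde\pi^{|u|}$, whose absolute value is at most $|\tilde\pi|^{(|r'|+|p^m v-u|)/p^m}\,|\tilde\pi|^{|u|}$.

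The core of the proof is then to verify the two assertions from this estimate. For the norm bound, I factor out $\tilde\pi^{|v|}$ (the orthonormal-basis weight in the $X$-direction) and $\tilde\pi^{|r'|/p^m}$ (the weight in $\mathcal{O}_{0,p^m}$ in the $\Lambda$-direction), so that the coefficient $c(r')$ appearing in the $\mathcal{O}_{0,p^m}$-expansion of $\alpha_{m,\Lambda}(\tilde\pi^{|u|}X^u)$ has
$$|c(r')|\le |\tilde\pi|^{\frac{|r'|+|p^m v-u|}{p^m}+|u|-|v|-\frac{|r'|}{p^m}}=|\tilde\pi|^{\frac{|p^m v-u|}{p^m}+|u|-|v|}.$$
Using $|p^m v-u|\ge p^m|v|-|u|$ (triangle inequality), the exponent is $\ge \frac{p^m|v|-|u|}{p^m}+|u|-|v|=|u|\big(1-\frac{1}{p^m}\big)\ge 0$, which shows $c(r')\in R$ and hence gives both $\alpha_{m,\Lambda}(\tilde\pi^{|u|}X^u)\in\mathcal{C}_0(\mathcal{O}_{0,p^m})$ (the crucial point being that the bound on $|c(r')|$ is independent of $r'$, so $c(r')\to 0$ as $|r'|\to\infty$ follows from the decay of the $b_{r',*}$) and the claimed inequality $|\alpha_{m,\Lambda}(\tilde\pi^{|u|}X^u)|\le |\tilde\pi|^{(1-1/p^m)|u|}$. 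By linearity and continuity this extends from basis elements to all of $\mathcal{C}_0(\mathcal{O}_0)$, after noting that $\alpha_{m,\Lambda}$ indeed improves convergence in the $X$-direction (via the factor $(1-1/p^m)|u|$ in the exponent, together with $\psi^m$ contracting the $X$-support) so that the image lies in $\mathcal{C}_0(\mathcal{O}_{0,p^m})$ rather than merely in $K(\cdots)$.

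I expect the main technical obstacle to be the mixed bookkeeping: unlike the one-variable setting of Section 2, the weights here are asymmetric — the $\Lambda$-direction carries weight $\tilde\pi^{|r|/p^m}$ in the target space $\mathcal{O}_{0,p^m}$ while the source uses $\tilde\pi^{|r|}$, and one must check the exponent arithmetic above really closes up (in particular that the $|r'|/p^m$ terms cancel and no residual negative power of $\tilde\pi$ survives). A secondary point requiring care is the passage from the estimate on monomials to the statement that $\alpha_{m,\Lambda}$ maps the whole space $\mathcal{C}_0(\mathcal{O}_0)$ into $\mathcal{C}_0(\mathcal{O}_{0,p^m})$: one must confirm that the decay of coefficients is preserved, i.e. that the operator is well-defined and bounded, which follows because the displayed bound on $|c(r')|$ depends only on $|u|,|v|$ and not on $r'$, exactly mirroring the ``independent of $u$'' remark at the end of Section 2.
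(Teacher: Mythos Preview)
Your approach matches the paper's: compute $\alpha_{m,\Lambda}$ on the basis elements $\tilde\pi^{|u|}X^u$, factor out the weights $\tilde\pi^{|r'|/p^m}$ and $\tilde\pi^{|v|}$, and control the remaining exponent via $|p^mv-u|\ge p^m|v|-|u|$. Your derivation of the norm bound $|\alpha_{m,\Lambda}(\tilde\pi^{|u|}X^u)|\le |\tilde\pi|^{(1-1/p^m)|u|}$ is correct.

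There is, however, a concrete gap in the membership claim, caused by an off-by-one in your conversion of \eqref{eq1029}. That estimate says $\operatorname{ord}_p b_{r,u}\ge (|r|+|u|)\frac{p-1}{p^{m+1}(\omega_1+\omega_2)}=\frac{|r|+|u|}{p^{m-1}}\operatorname{ord}_p\tilde\pi$, hence $|b_{r,u}|\le|\tilde\pi|^{(|r|+|u|)/p^{m-1}}$; your ``cleaner bound'' with exponent $(|r|+|u|)/p^m$ is a factor of $p$ too weak. With your bound the $|r'|/p^m$ terms cancel exactly, so the estimate $|c(r',v)|\le|\tilde\pi|^{(1-1/p^m)|u|}$ carries no decay in $|r'|$ at all, and ``the decay of the $b_{r',*}$'' cannot rescue it: the weight $\tilde\pi^{-|r'|/p^m}$ you have factored out consumes precisely the decay your bound on $b_{r',*}$ supplies. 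Using the correct exponent $1/p^{m-1}$, the paper obtains
\[
|c(r',v)|\le |\tilde\pi|^{\frac{(p-1)|r'|+(p^m-1)|u|}{p^m}},
\]
which does give $c(r',v)\to 0$ as $|r'|\to\infty$ and hence membership of each $\zeta(v)$ in $\mathcal O_{0,p^m}$. The paper in fact settles the mapping property first by a cleaner route you do not mention: it observes that $\alpha_{m,\Lambda}$ factors as
\[
\mathcal C_0(\mathcal O_0)\hookrightarrow K\bigl(p^{\frac{p-1}{p^2(\omega_1+\omega_2)}},p^{\frac{p-1}{p^2(\omega_1+\omega_2)}}\bigr)\xrightarrow{\,H_m\,}K(\cdot,\cdot)\xrightarrow{\,\psi^m\,}K\bigl(p^{\frac{p-1}{p^{m+1}(\omega_1+\omega_2)}},p^{\frac{p-1}{p(\omega_1+\omega_2)}}\bigr)\hookrightarrow\mathcal C_0(\mathcal O_{0,p^m}),
\]
which handles both the $\Lambda$- and $X$-direction decay at once without the exponent bookkeeping.
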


\begin{proof}
We have that $\alpha_{m,\Lambda}$ is a composition of the following maps:

$$\mathcal{C}_0(\mathcal{O}_{0})\hookrightarrow K(p^{\frac{p-1}{p^2(\omega_1+\omega_2)}},p^{\frac{p-1}{p^2(\omega_1+\omega_2)}})\stackrel{H_m(\Lambda, X)}\longrightarrow  K(p^{\frac{p-1}{p^{m+1}(\omega_1+\omega_2)}}, p^{\frac{p-1}{p^{m+1}(\omega_1+\omega_2)}})$$

$$\stackrel{\psi^m}\longrightarrow K(p^{\frac{p-1}{p^{m+1}(\omega_1+\omega_2)}}, p^{\frac{p-1}{p(\omega_1+\omega_2)}})\hookrightarrow \mathcal{C}_0(\mathcal{O}_{0,p^m}).$$
Hence $\alpha_{m,\Lambda}$ maps $\mathcal{C}_0(\mathcal{O}_0)$ into $\mathcal{C}_0(\mathcal{O}_{0,p^m})$.

By (\ref{eq1029}), we write $H_m(\Lambda,X)=\sum_{(s,v)\in M_1\times M_2} B_{s,v}\Lambda^s X^v$ with
\begin{equation}\label{eq09091}
\vert B_{s,v} \vert p^{\frac{(p-1)(|s|+|v|)}{p^{m+1}(\omega_1+\omega_2)}}\le 1.
\end{equation}
We have that
\begin{eqnarray}\label{09101}
\alpha_{m,\Lambda}(\tilde{\pi}^{|u|}X^{u})&=&\psi^m\big(\sum_{(s,v)\in M_1\times M_2}B_{s,v}\tilde{\pi}^{|u|}\Lambda^s X^{v+u}\big)=\sum_{(s,v)\in M_1\times M_2}B_{s, p^m v-u}\tilde{\pi}^{|u|}\Lambda^s X^{v}\nonumber\\
&=&\sum_{(s,v)\in M_1\times M_2}B_{s, p^m v-u}\tilde{\pi}^{|u|-|v|-\frac{|s|}{p^m}}(\tilde{\pi}^{\frac{|s|}{p^m}}\Lambda^s) (\tilde{\pi}^{|v|}X^{v}).
\end{eqnarray}
By (\ref{eq09091}), we have that
\begin{eqnarray*}
\big|B_{s, p^m v-u}\tilde{\pi}^{|u|-|v|-\frac{|s|}{p^m}}\big|&\le& \big|\tilde{\pi}\big|^{\frac{|s|+|p^m v-u|}{p^{m-1}}+|u|-\frac{|s|}{p^m}-|v|}\\
&=&\big|\tilde{\pi}\big|^{\frac{(p-1)|s|+p|p^mv-u|+p^m |u|-p^m |v|}{p^m}}\\
&\le&\big|\tilde{\pi}\big|^{\frac{(p-1)|s|+|p^mv-u|+p^m |u|-p^m |v|}{p^m}}\\
&\le&\big|\tilde{\pi}\big|^{\frac{(p-1)|s|+(p^m-1) |u|}{p^m}}\\
&\le&\big|\tilde{\pi}\big|^{\frac{(p^m-1) |u|}{p^m}}.
\end{eqnarray*}
It follows that
$$|\alpha_{m,\Lambda}(\tilde{\pi}^{|u|}X^{u}) |\le  |\tilde{\pi}|^{(1-\frac{1}{p^m})|u|}.$$
This finishes the proof of Lemma \ref{lem1}.
\end{proof}

Another estimate for the $p$-adic value of $\alpha_{m,\Lambda}(\tilde{\pi}^{|u|}X^{u})$ is given by the following.

\begin{lemma}\label{lem2}
Write $$\alpha_{m,\Lambda}(\tilde{\pi}^{|u|}X^{u})=\sum_{(s,v)\in M_1\times M_2} D^{(u)}_{s,v}\tilde{\pi}^{\frac{|s|}{p^m}}\Lambda^s\tilde{\pi}^{|v|}X^{v}.$$
Then we have that $|D^{(u)}_{s,v}|\le |\tilde{\pi}|^{(p-1)(\frac{|s|}{p^m}+|v|)}$. In particular, $|\alpha_{m,\Lambda}(1)-1|<1$.
\end{lemma}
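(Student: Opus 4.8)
The plan is to extract the coefficients $D^{(u)}_{s,v}$ directly from the explicit expansion~\eqref{09101} derived in the proof of Lemma~\ref{lem1}. Comparing~\eqref{09101} with the stated normalization, we read off
$$D^{(u)}_{s,v}=B_{s,\,p^mv-u}\,\tilde{\pi}^{|u|-|v|-\frac{|s|}{p^m}},$$
where the $B_{s,w}$ are the coefficients of $H_m(\Lambda,X)$. The task is thus to bound this quantity below by $|\tilde\pi|^{(p-1)(\frac{|s|}{p^m}+|v|)}$, which is a \emph{sharper} estimate than the one already obtained inside the proof of Lemma~\ref{lem1} (there the goal was only to control $|u|$, so the $|s|$ and $|v|$ contributions were thrown away). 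First I would rerun the same chain of inequalities from that proof but keep track of the $|s|$- and $|v|$-dependence: starting from~\eqref{eq09091}, which gives ${\rm ord}_p B_{s,w}\ge \frac{|s|+|w|}{p^{m-1}}\,{\rm ord}_p\tilde\pi$ with $w=p^mv-u$, one writes
$$\frac{|s|+|p^mv-u|}{p^{m-1}}+|u|-|v|-\frac{|s|}{p^m}
=\frac{(p-1)|s|+p\,|p^mv-u|+p^m|u|-p^m|v|}{p^m}.$$
Now I would use the triangle inequality $|p^mv-u|\ge p^m|v|-|u|$ together with $|p^mv-u|\ge 0$ to split the middle term as $p\,|p^mv-u|\ge (p-1)|p^mv-u|+ (p^m|v|-|u|)$, producing
$$\frac{(p-1)(|s|+|p^mv-u|)+(p^m-1)|u|+p^m|v|-|u|+\dots}{p^m},$$
and reorganize so that the surviving lower bound is $(p-1)\bigl(\frac{|s|}{p^m}+|v|\bigr)$; the extra nonnegative terms involving $|u|$ and $|p^mv-u|$ are simply discarded. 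The exact bookkeeping is routine, but I would be careful that the inequality $p\,|p^mv-u|\ge |p^mv-u|+(p^m-1)|v|p^{m-1}/\!\dots$ is applied in a form that leaves a clean $(p-1)|v|p^m$ contribution after dividing by $p^m$; the key point is that every discarded term is genuinely $\ge 0$, which holds because $|p^mv-u|\ge 0$, $|u|\ge 0$, and (from Lemma~\ref{lem1}'s chain) $p^m|u|\ge |u|$.

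For the final assertion $|\alpha_{m,\Lambda}(1)-1|<1$, I would specialize to $u=0$, so that $\alpha_{m,\Lambda}(1)=\sum_{(s,v)}D^{(0)}_{s,v}\tilde\pi^{\frac{|s|}{p^m}}\Lambda^s\tilde\pi^{|v|}X^v$ with, by the bound just proved, $|D^{(0)}_{s,v}\tilde\pi^{\frac{|s|}{p^m}}\tilde\pi^{|v|}|\le |\tilde\pi|^{p(\frac{|s|}{p^m}+|v|)}$. The constant term $(s,v)=(0,0)$ equals $B_{0,0}$, which is the constant term of $H_m(\Lambda,X)=\exp\pi(\hat f-\hat f(\Lambda^{p^m},X^{p^m}))$; since the two arguments of $\exp$ have no constant term after the difference is formed on the torus (the support of $f$ lies in $M_1\times M_2$ with no $(0,0)$ term by the setup, or more robustly since $\hat f(\Lambda,X)$ and $\hat f(\Lambda^{p^m},X^{p^m})$ have the \emph{same} constant term), we get $B_{0,0}=1$, hence $\alpha_{m,\Lambda}(1)-1=\sum_{(s,v)\ne(0,0)}D^{(0)}_{s,v}\tilde\pi^{\frac{|s|}{p^m}}\Lambda^s\tilde\pi^{|v|}X^v$. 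Each remaining coefficient has absolute value at most $|\tilde\pi|^{p(\frac{|s|}{p^m}+|v|)}<1$ because $\frac{|s|}{p^m}+|v|>0$ and $|\tilde\pi|<1$, so the supremum norm of the difference is $<1$.

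The main obstacle is purely the inequality chain in the first paragraph: one must arrange the algebra so that, after using $|p^mv-u|\ge p^m|v|-|u|$, the coefficient of $|v|$ that emerges is exactly $(p-1)p^m$ (giving $(p-1)|v|$ after division) rather than something smaller, while simultaneously \emph{not} borrowing against the $(p-1)|s|$ term — i.e. the splitting of $p\,|p^mv-u|$ into a part that protects $|p^mv-u|$ and a part that feeds $|v|$ has to be done so that both pieces stay nonnegative for all $u,v\in M_2$. Once that balancing is set up correctly the rest is immediate, and the $|\alpha_{m,\Lambda}(1)-1|<1$ corollary is essentially free. I expect this to mirror, almost verbatim, the estimate in the proof of Lemma~\ref{lem1}, only tracking two more parameters.
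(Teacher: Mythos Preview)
Your approach to the main estimate is essentially the paper's: identify $D^{(u)}_{s,v}=B_{s,p^mv-u}\tilde\pi^{|u|-|v|-\frac{|s|}{p^m}}$ from \eqref{09101}, then use the triangle inequality $|p^mv-u|+|u|\ge p^m|v|$ together with $p^m|u|\ge p|u|$ to extract the $(p-1)\bigl(\tfrac{|s|}{p^m}+|v|\bigr)$ lower bound. The paper does this in two clean steps (first reduce $p^m|u|$ to $p|u|$, then apply the triangle inequality once multiplied by $p$); your two-stage splitting of $p|p^mv-u|$ reaches the same place, so the bookkeeping you worry about is fine.

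There is, however, a genuine slip in your treatment of the constant term. You argue that because $\hat f(\Lambda,X)-\hat f(\Lambda^{p^m},X^{p^m})$ has no constant term, the constant term of $H_m=\exp\pi(\hat f-\hat f^{(p^m)})$ equals $1$. That is false in general: expanding $\exp(\pi g)$ for a Laurent polynomial $g$ with zero constant term still produces a nontrivial constant term from cross terms like $\tfrac{\pi^2}{2}g^2$ (e.g.\ $g=x+x^{-1}$ gives constant term $1+\pi^2+\cdots$). What is true, and what the paper uses, is only $B_{0,0}\equiv 1\pmod\pi$, since every nonconstant term in the exponential series carries at least one factor of $\pi$. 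This is still enough: $|D^{(0)}_{0,0}-1|=|B_{0,0}-1|\le|\pi|<1$, while for $(s,v)\neq(0,0)$ your estimate already gives $|D^{(0)}_{s,v}|\le|\tilde\pi|^{(p-1)(\frac{|s|}{p^m}+|v|)}<1$. So the conclusion survives, but the reasoning needs to be corrected to the congruence rather than the equality.
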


\begin{proof}
It follows from (\ref{09101}) that $$D^{(u)}_{s,v}=B_{s, p^m v-u}\tilde{\pi}^{|u|-|v|-\frac{|s|}{p^m}}.$$
Then by (\ref{eq09091}), we have that
\begin{eqnarray*}
\big|D^{(u)}_{s,v}\big|&\le& \big|\tilde{\pi}\big|^{\frac{|s|+|p^m v-u|}{p^{m-1}}+|u|-\frac{|s|}{p^m}-|v|}
=\big|\tilde{\pi}\big|^{\frac{(p-1)|s|+p|p^mv-u|+p^m |u|-p^m |v|}{p^m}}\\
&\le&\big|\tilde{\pi}\big|^{\frac{(p-1)|s|+p|p^mv-u|+p |u|-p^m |v|}{p^m}}\\
&\le& |\tilde{\pi}|^{\frac{p-1}{p^m}|s|+(p-1)|v|}.
\end{eqnarray*}
Note that $D_{0,0}^{(0)}=B_{0,0}$ and $B_{0,0}\equiv 1 \mod \pi$.
We have that $D_{0,0}^{(0)}\equiv 1 \mod \tilde{\pi}$. It follows that $|\alpha_{m,\Lambda}(1)-1|<1$. This finishes the proof of Lemma \ref{lem2}.
\end{proof}

\subsection{Fibres}\label{fibre}

Fix $\lambda\in \bar{\mathbb{F}}_q^s$. Let
$\alpha_{ad(\lambda), \lambda}:=\psi^{a d(\lambda)}\circ H_{a d(\lambda)}(\hat\lambda,X)$, where $\hat\lambda\in  \overline{\mathbb Q}_p$ is the Teichm\"uller lifting of $\lambda$.
By Lemma \ref{lem2}, we conclude that $\alpha_{ad(\lambda), \lambda}$ is a completely continuous map from $\mathcal{C}_0(\lambda)$ to
$\mathcal{C}_0(\lambda)$, where
$\mathcal{C}_0(\lambda)=\mathcal{C}_0(\mathcal{O}_0) \otimes K(\lambda) $.
The Dwork trace formula (\ref{trace}) states that
$$L(f(\lambda,X),T)^{(-1)^{n+1}}= \det(1-\alpha_{ad(\lambda), \lambda}T|_{\mathcal{C}_0(\lambda)})^{\delta_{q^{d(\lambda)}}^n},$$
where $g(T)^{\delta_{q^{d(\lambda)}}}:=g(T)/g(q^{d(\lambda)}T)$.
It is well-known that this $L$-function is rational. By \cite[Corollary 3.12]{AS}, the $L$-function $L(\lambda,T)^{(-1)^{n+1}}$ has a unique reciprocal root, which is a $p$-adic $1$-unit, denoted by  $\pi_0(\lambda )$.
For a $p$-adic integer $\kappa$, the unit root $L$-function associated to the family of $f(\Lambda, X)$ is defined by
$$L_{unit}(f, \kappa, T):=\prod_{\lambda\in |\mathbb{G}_m^s/\mathbb{F}_q|}\frac{1}{1-\pi_0(\lambda )^{\kappa}T^{d(\lambda)}}.$$

\subsection{Infinite symmetric power}\label{3.5}
For a positive integer $m$, let $\mathcal{S}(\mathcal{O}_{0,p^m}):=\mathcal{O}_{0.p^m}[[\{e_{u}\}_{u\in M_2\setminus\{0\}}]]$ be the formal power series ring over $\mathcal{O}_{0,p^m}$ with variables $\{e_u\}_{u\in M_2\setminus \{0\}}$. We equip this ring with the sup-norm on the coefficients.
For $\mathbf u=(u_1,\cdots, u_t)$, let $e_{\mathbf{u}}:=e_{u_1}\cdots e_{u_t}$ be the monomial of degree $t$. Let $\mathcal{S}(M_2)$ be the set of $\mathbf{u}$ corresponding to $e_{\mathbf{u}}$. Let $$|\mathbf{u}|:=|u_1|+\cdots+|u_t|.$$
Define a subspace $\mathcal{S}_0(\mathcal{O}_{0,p^m})$ of $\mathcal{S}(\mathcal{O}_{0,p^m})$ by
$$\mathcal{S}_0(\mathcal{O}_{0,p^m}):=\{\zeta=\sum_{\mathbf{u}\in  \mathcal{S}(M_2)} \zeta(\mathbf{u})e_{\mathbf{u}}: \zeta(\mathbf{u})\in \mathcal{O}_{0,p^m},  \zeta(\mathbf{u})\rightarrow 0 \ {\rm as}\ |\mathbf{u}| \rightarrow \infty\}.$$
We embed $\mathcal{C}_0(\mathcal{O}_{0,p^m})$ into $\mathcal{S}_0(\mathcal{O}_{0,p^m})$ by $\Upsilon(\tilde{\pi}^{|u|}X^{u})=e_{u}. $
From Lemma \ref{lem2}, we know that  $\alpha_{m,\Lambda}(1)=1+\eta_m$ with $\eta_m\in \mathcal{C}_0(\mathcal{O}_{0,p^m})$ and $|\eta_m|<1$. Hence for any $p$-adic integer $\kappa$, $(\Upsilon\circ \alpha_{m,\Lambda}(1))^{\kappa}\in \mathcal{S}_0(\mathcal{O}_{0,p^m})$ is well-defined.
Now we can extend the map $\alpha_{m,\Lambda}$ to a map $[\alpha_{m,\Lambda}]_{\kappa}$ from $\mathcal{S}_0(\mathcal{O}_0)$ to $\mathcal{S}_0(\mathcal{O}_{0,p^m})$ by
$$ [\alpha_{m,\Lambda}]_{\kappa}(e_{\mathbf{u}}):= \big( \Upsilon\circ\alpha_{m,\Lambda}(1)\big)^{\kappa-t} \big( \Upsilon\circ\alpha_{m,\Lambda}(\tilde{\pi}^{|u_1|}X^{u_1} )\big)\cdots \big( \Upsilon\circ\alpha_{m,\Lambda}(\tilde{\pi}^{|u_t|}X^{u_t} )\big). $$
We have the following estimate for $[\alpha_{m,\Lambda}]_{\kappa}$.

\begin{lemma}\label{lem3}
For $\mathbf{u}=(u_1,\ldots,u_t)$, if we write $[\alpha_{m,\Lambda}]_{\kappa}(e_{\mathbf{u}})=\sum_{(s,\mathbf{v})\in M_1\times \mathcal{S}(M_2)}D_{s,\mathbf{v}}\tilde{\pi}^{\frac{|s|}{p^m}}\Lambda^se_{\mathbf{v}}$, then
$$ {\rm ord}_{\tilde{\pi}} D_{s,\mathbf{v}}\ge \frac{p-1}{p^m}|s|+(p-1)|\mathbf{v}|.$$
\end{lemma}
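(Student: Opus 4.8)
The plan is to reduce the estimate on $D_{s,\mathbf v}$ to the single-variable estimate already obtained in Lemma~\ref{lem2}, together with a uniform bound on the powers $(\Upsilon\circ\alpha_{m,\Lambda}(1))^{\kappa-t}$. First I would expand the definition
$$[\alpha_{m,\Lambda}]_{\kappa}(e_{\mathbf u})=\big(\Upsilon\circ\alpha_{m,\Lambda}(1)\big)^{\kappa-t}\prod_{j=1}^{t}\big(\Upsilon\circ\alpha_{m,\Lambda}(\tilde{\pi}^{|u_j|}X^{u_j})\big),$$
and apply $\Upsilon$ to the formula in Lemma~\ref{lem2}: writing $\alpha_{m,\Lambda}(\tilde{\pi}^{|u_j|}X^{u_j})=\sum_{(s,v)}D^{(u_j)}_{s,v}\tilde{\pi}^{|s|/p^m}\Lambda^s\tilde{\pi}^{|v|}X^{v}$, we get $\Upsilon\circ\alpha_{m,\Lambda}(\tilde{\pi}^{|u_j|}X^{u_j})=\sum_{(s,v)}D^{(u_j)}_{s,v}\tilde{\pi}^{|s|/p^m}\Lambda^s e_v$ with ${\rm ord}_{\tilde\pi}D^{(u_j)}_{s,v}\ge \frac{p-1}{p^m}|s|+(p-1)|v|$, i.e. each factor individually satisfies exactly the bound we want. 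The point is that this bound is \emph{stable under multiplication}: if two series $\sum c_{s,v}\tilde\pi^{|s|/p^m}\Lambda^s e_v$ and $\sum c'_{s',v'}\tilde\pi^{|s'|/p^m}\Lambda^{s'}e_{v'}$ both satisfy ${\rm ord}_{\tilde\pi}c\ge \tfrac{p-1}{p^m}|s|+(p-1)|v|$, then so does their product, because when we collect the coefficient of $\tilde\pi^{(|s|+|s'|)/p^m}\Lambda^{s+s'}e_{v\cdot v'}$ the valuation of $c_{s,v}c'_{s',v'}$ is at least $\tfrac{p-1}{p^m}(|s|+|s'|)+(p-1)(|v|+|v'|)$, and $|v\cdot v'|=|v|+|v'|$, $|s+s'|\le|s|+|s'|$ (so the exponent only improves after re-indexing in $\Lambda$). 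Thus the submodule of $\mathcal S(\mathcal O_{0,p^m})$ cut out by this growth condition is a subring, and the product over $j=1,\dots,t$ of the factors $\Upsilon\circ\alpha_{m,\Lambda}(\tilde\pi^{|u_j|}X^{u_j})$ already lies in it.

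Next I would handle the prefactor $\big(\Upsilon\circ\alpha_{m,\Lambda}(1)\big)^{\kappa-t}$. By Lemma~\ref{lem2} with $u=0$ we have $\alpha_{m,\Lambda}(1)=1+\eta_m$ where $\eta_m=\sum_{(s,v)\neq(0,0)}D^{(0)}_{s,v}\tilde\pi^{|s|/p^m}\Lambda^s\tilde\pi^{|v|}X^v$ satisfies ${\rm ord}_{\tilde\pi}D^{(0)}_{s,v}\ge\frac{p-1}{p^m}|s|+(p-1)|v|$ and in particular $|\eta_m|<1$, so $\Upsilon(\eta_m)$ lies in the (topologically nilpotent part of the) subring above. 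For $\kappa$ a $p$-adic integer, $(1+\Upsilon(\eta_m))^{\kappa}=\sum_{k\ge 0}\binom{\kappa}{k}\Upsilon(\eta_m)^k$ converges in $\mathcal S_0(\mathcal O_{0,p^m})$ since $\binom{\kappa}{k}\in\mathbb Z_p$ and $|\Upsilon(\eta_m)^k|\to 0$; moreover each $\Upsilon(\eta_m)^k$ lies in the subring, which is closed, so the limit $(1+\Upsilon(\eta_m))^{\kappa-t}$ does too. Hence $[\alpha_{m,\Lambda}]_{\kappa}(e_{\mathbf u})$ is a product of elements of this subring, and therefore again lies in it, which is precisely the claimed bound ${\rm ord}_{\tilde\pi}D_{s,\mathbf v}\ge\frac{p-1}{p^m}|s|+(p-1)|\mathbf v|$.

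The only genuine subtlety — and the step I would write out carefully — is the bookkeeping showing that the growth condition really is preserved under multiplication when one re-expands in the monomial basis $\tilde\pi^{|s|/p^m}\Lambda^s e_{\mathbf v}$: one must check that the \emph{extra} powers of $\tilde\pi$ accumulated from $|s|+|s'|-|s+s'|\ge 0$ (triangle inequality on the $\Lambda$-exponents) and from the fact that $|\mathbf v|$ is genuinely additive on concatenation of tuples are both nonnegative, so that combining factors never degrades the exponent. Everything else — convergence of the binomial series, closedness of the subring, the reduction to Lemma~\ref{lem2} — is routine. I would also note that the case $t=0$ (i.e. $e_{\mathbf u}=1$) is covered by the prefactor argument alone, and that the estimate is uniform in $\mathbf u$ only through its length $t$, which is all that is needed for the nuclearity of $[\alpha_{m,\Lambda}]_\kappa$ in the next section.
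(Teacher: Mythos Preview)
Your argument is correct. Both your proof and the paper's rest on the same core input (Lemma~\ref{lem2}) and the same multiplicativity observation, but you package it differently: you observe once and for all that the growth condition ${\rm ord}_{\tilde\pi}D_{s,\mathbf v}\ge\frac{p-1}{p^m}|s|+(p-1)|\mathbf v|$ cuts out a closed subring of $\mathcal S_0(\mathcal O_{0,p^m})$, and then handle the prefactor $(1+\Upsilon(\eta_m))^{\kappa-t}$ for arbitrary $\kappa\in\mathbb Z_p$ directly via the binomial series $\sum_{k\ge0}\binom{\kappa-t}{k}\Upsilon(\eta_m)^k$, which converges in that closed subring. The paper instead first treats the case where $\kappa$ is a positive integer $\ge t$ by explicitly expanding the $\kappa$-fold product and summing the estimates from Lemma~\ref{lem2}, and then passes to general $\kappa\in\mathbb Z_p$ by taking a sequence $k_n\to\kappa$ $p$-adically with $k_n\to\infty$ archimedeanly and invoking the later Lemma~\ref{lem6} to get $[\alpha_{m,\Lambda}]_{k_n}\to[\alpha_{m,\Lambda}]_\kappa$. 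Your route is self-contained and avoids the forward reference to Lemma~\ref{lem6}; the paper's route makes the integer case completely explicit but pays for it with that forward dependency. Either way the bookkeeping you flag (triangle inequality $|s+s'|\le|s|+|s'|$ and additivity $|\mathbf v\cdot\mathbf v'|=|\mathbf v|+|\mathbf v'|$) is exactly what is needed, and your verification of it is accurate.
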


\begin{proof}
By Lemma \ref{lem2}, we write
$$\Upsilon\big(\alpha_{m,\Lambda}(1)\big)=\sum_{(s,v)}D_{s,v}^{(0)}(\tilde{\pi}^{\frac{|s|}{p^m}}\Lambda^s)e_v$$
and
$$\Upsilon\big(\alpha_{m,\Lambda}(\tilde{\pi}^{|u_j|}X^{u_i})\big)=\sum_{(s,v)}D_{s,v}^{(i)}(\tilde{\pi}^{\frac{|s|}{p^m}}\Lambda^s)e_v,$$
where
$|D^{(j)}_{s,v}|\le |\tilde{\pi}|^{\frac{p-1}{p^m}|s|+(p-1)|v|}$ for $j=0,1,\ldots,t$.
We first consider the case that $\kappa$ is a positive integer such that $t\le \kappa$. We have that
\begin{eqnarray*}
[\alpha_{m,\Lambda}]_{\kappa}(e_{\mathbf{u}})
&=&\big(\sum_{(s,v)}D_{s,v}^{(0)}(\tilde{\pi}^{\frac{|s|}{p^m}}\Lambda^s)e_v\big)^{\kappa-t}\big(\sum_{(s,v)}D_{s,v}^{(1)}(\tilde{\pi}^{\frac{|s|}{p^m}}\Lambda^s)e_v\big)\cdots\big(\sum_{(s,v)}D_{(s,v)}^{(t)}(\tilde{\pi}^{\frac{|s|}{p^m}}\Lambda^s)e_v\big)\\
&=&\sum_{\stackrel{s_1+\ldots+s_{\kappa}=s}{\mathbf{v}=(v_1, \ldots,v_\kappa)}}D^{(j_1)}_{s_1,v_1}D^{(j_2)}_{s_2,v_2}\cdots D^{(j_k)}_{s_\kappa,v_\kappa}\tilde{\pi}^{\frac{|s_1|+\ldots+|s_\kappa|-|s|}{p^m}}\big(\tilde{\pi}^{\frac{|s|}{p^m}}\Lambda^s\big)e_{\mathbf v},
\end{eqnarray*}
where  $j_1,\ldots,j_{\kappa}$  take values in $\{0,1,\ldots,t\}$.
Then
\begin{eqnarray*}
\text{ord}_{\tilde{\pi}}\big(D_{s,\mathbf{v}}\big)&\ge &\min_{\stackrel{s_1+\ldots+s_{\kappa}=s}{\mathbf{v}=(v_1, \ldots,v_\kappa)}}\text{ord}_{\tilde{\pi}}\big(D^{(j_1)}_{s_1,v_1}D^{(j_2)}_{s_2,v_2}\cdots D^{(j_k)}_{s_\kappa,v_\kappa}\tilde{\pi}^{\frac{|s_1|+\ldots+|s_\kappa|-|s|}{p^m}}\big)\\
&\geq&\frac{p-1}{p^m}(|s_1|+\ldots+|s_\kappa|)+(p-1)(|v_1+\ldots+|v_\kappa|)+\frac{|s_1|+\ldots+|s_\kappa|-|s|}{p^m}\\
&\geq&\frac{p-1}{p^m}|s|+(p-1)|\mathbf{v}|.
\end{eqnarray*}
For the case that $\kappa$ is a $p$-adic integer, we take a positive integers sequence $\{k_n\}$ tending to $\infty$ in the archimedean sense and tending to $\kappa$ p-adically. By Lemma \ref{lem6} below, we have
$$\lim_{n\to \infty}[\alpha_{m,\Lambda}]_{k_n}=[\alpha_{m,\Lambda}]_{\kappa},$$
which means Lemma \ref{lem3} is valid for all $p$-adic integer $\kappa$.

\end{proof}

Let $[\alpha_{ad(\lambda),\lambda}]_{\kappa}$ be the specialization of $[\alpha_{ad(\lambda),\Lambda}]_{\kappa}$ at $\lambda$. Write
$$[\alpha_{ad(\lambda),\lambda}]_{\kappa}(e_{\mathbf{u}})=\sum_{\mathbf v}{D}_{\mathbf{v}}e_{\mathbf{v}}$$
by Lemma \ref{lem3}, we have ${\rm ord}_{\tilde{\pi}}D_{\mathbf{v}}\ge (p-1)|\mathbf{v}|$, which means
$[\alpha_{ad(\lambda),\lambda}]_{\kappa}$ is a completely continuous operator of $\mathcal{S}_0(\mathcal{O}_0)\otimes K(\lambda)$.
Hence we can define the $L$-function associated to $[\alpha_{a,\Lambda}]_{\kappa}$ by the following
$$L([\alpha_a]_{\kappa}, T):=\prod_{\lambda\in |\mathbb{G}_m^s/\mathbb{F}_q|}\frac{1}{\det (1-T[\alpha_{ad(\lambda),\lambda}]_{\kappa}|_{\mathcal{S}_0(\mathcal{O}_0)\otimes K(\lambda)})}.$$

For positive integer $i$, we can similarly  define an operator
$$[\alpha_{a,\Lambda}]_{\kappa-i}\otimes \wedge^i \alpha_{a,\Lambda}:\mathcal{S}_0(\mathcal{O}_0)\otimes\wedge^i \mathcal{C}_0(\mathcal{O}_{0})\to \mathcal{S}_0(\mathcal{O}_{0,q})\otimes\wedge^i\mathcal{C}_0(\mathcal{O}_{0,q})$$
and its associated $L$-function $L([\alpha_a]_{\kappa-i}\otimes \wedge^i \alpha_a,T)$.
By Lemma 2.1 and Corollary 2.4 in \cite{H14}, we have that
$$L_{unit}(f, \kappa, T)=L([\alpha_a]_{\kappa},T) \prod_{i\ge 2} L([\alpha_a]_{\kappa-i}\otimes \wedge^i \alpha_a,T)^{(-1)^{i-1}(i-1)}.$$
Since $\alpha_{a,\Lambda}(1)\equiv 1 \mod \tilde{\pi}$ and $\alpha_{a,\Lambda}(\tilde{\pi}^uX^u)\equiv 0 \mod \tilde{\pi}$ for $u\in M_2\setminus\{0\}$, one can see that the unit root of $L_{unit}(f, \kappa, T)$ only comes from $L([\alpha_a]_{\kappa},T)$.

We now view $\mathcal{S}_0(\mathcal{O}_{0})$ as a $p$-adic Banach space over $K$ with orthonormal basis $$\{ \tilde{\pi}^{|r|}\Lambda^{r}e_{\mathbf{u}}:r\in M_1, \mathbf{u}\in \mathcal{S}(M_2)\}.$$
Consider the map $\psi_{\Lambda}$ defined by
$$\psi_{\Lambda}(\sum D_{r,\mathbf v}\Lambda^{r}e_{\mathbf{u}})= \sum D_{pr,\mathbf v}\Lambda^{r}e_{\mathbf{u}}.$$
One can check that
$$\psi_{\Lambda}^m\big(\mathcal{S}_0(\mathcal{O}_{0,p^m})\big)\subset \mathcal{S}_0(\mathcal{O}_0).$$
Now we define the Frobenius map $[\beta_a]_{\kappa}$ acting on $\mathcal{S}_0(\mathcal{O}_0)$ by $[\beta_a]_{\kappa}:=\psi_{\Lambda}^a\circ [\alpha_{a,\Lambda}]_{\kappa}$.

 \begin{lemma}\label{lem3.7}
 The map $[\beta_a]_{\kappa}$ is completely continuous.
 \end{lemma}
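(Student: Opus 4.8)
The plan is to show that $[\beta_a]_{\kappa}=\psi_\Lambda^a\circ[\alpha_{a,\Lambda}]_{\kappa}$ is a composition in which the last genuinely new arrow is a completely continuous inclusion of Banach spaces, exactly as was done for $\alpha_1$ in Section 2 and for $\alpha_{m,\Lambda}$ in Lemma \ref{lem1}. First I would record the effect of $[\alpha_{a,\Lambda}]_{\kappa}$ on the orthonormal basis $\{\tilde\pi^{|r|}\Lambda^r e_{\mathbf u}\}$: by Lemma \ref{lem3} (with $m=a$), writing $[\alpha_{a,\Lambda}]_{\kappa}(e_{\mathbf u})=\sum_{(s,\mathbf v)}D_{s,\mathbf v}\tilde\pi^{|s|/p^a}\Lambda^s e_{\mathbf v}$, one has ${\rm ord}_{\tilde\pi}D_{s,\mathbf v}\ge \frac{p-1}{p^a}|s|+(p-1)|\mathbf v|$. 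Multiplying by $\tilde\pi^{|r|}\Lambda^r$ shifts the $\Lambda$-exponent to $r+s$, so $[\alpha_{a,\Lambda}]_{\kappa}$ lands in the weighted space $\mathcal{S}_0(\mathcal{O}_{0,p^a})$ with a quantitative decay in both the $\Lambda$- and $e$-degrees; this is the analogue of the bound $|B_{pv-u}|\,|\tilde\pi|^{|u|-|v|}\le p^{-(p-1)|v|{\rm ord}_p\tilde\pi}$ used in Section 2.

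Next I would apply $\psi_\Lambda^a$, which extracts the coefficient of $\Lambda^{ar'}$, i.e.\ replaces the monomial $\tilde\pi^{|r|}\Lambda^r$ contributions by those with $|r|$ of size roughly $a|r'|$; combined with the factor $\tilde\pi^{|s|/p^a}$ coming from Lemma \ref{lem3}, this produces, for the image of each basis vector, coefficients whose $\tilde\pi$-valuation grows linearly in both $|r'|$ and $|\mathbf v|$, uniformly over the input basis vector. Concretely, I would check that $\psi_\Lambda^a\big(\mathcal{S}_0(\mathcal{O}_{0,p^a})\big)\subset\mathcal{S}_0(\mathcal{O}_0)$ as already noted before the lemma, and that this inclusion — viewed through the weights — contracts: the matrix entry of $[\beta_a]_{\kappa}$ from $\tilde\pi^{|r|}\Lambda^r e_{\mathbf u}$ to $\tilde\pi^{|r'|}\Lambda^{r'}e_{\mathbf v}$ has $\tilde\pi$-valuation bounded below by a fixed positive multiple of $|r'|+|\mathbf v|$, independent of $(r,\mathbf u)$. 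This is precisely the criterion (cf.\ \cite[Theorem 6.10]{M1} and the argument for $\alpha_1$) for an operator on a Banach space with countable orthonormal basis to be completely continuous: the columns of its matrix tend uniformly to zero.

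I would assemble these estimates exactly as in Section 2: $[\beta_a]_{\kappa}$ factors as
$$\mathcal{S}_0(\mathcal{O}_0)\xrightarrow{[\alpha_{a,\Lambda}]_{\kappa}}\mathcal{S}_0(\mathcal{O}_{0,p^a})\xrightarrow{\psi_\Lambda^a}\mathcal{S}_0(\mathcal{O}_0),$$
and the final map, being a weight-increasing (hence completely continuous) inclusion once the intermediate decay from Lemma \ref{lem3} is taken into account, forces $[\beta_a]_{\kappa}$ to be completely continuous. For the case of a general $p$-adic integer $\kappa$ I would, as in the proof of Lemma \ref{lem3}, pass to a sequence of positive integers $k_n\to\kappa$ $p$-adically with $k_n\to\infty$ archimedeanly and use the convergence $[\alpha_{a,\Lambda}]_{k_n}\to[\alpha_{a,\Lambda}]_{\kappa}$ (Lemma \ref{lem6}) together with the fact that a limit, in operator norm, of completely continuous operators is completely continuous; the uniform valuation bounds from Lemma \ref{lem3} are independent of $\kappa$, so nothing degenerates in the limit.

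The main obstacle I anticipate is purely bookkeeping rather than conceptual: one must be careful that the $\tilde\pi^{|s|/p^a}$-type weights appearing in $\mathcal{O}_{0,p^a}$ versus the integral weights in $\mathcal{O}_0$ are reconciled correctly after applying $\psi_\Lambda^a$, so that the net exponent of $\tilde\pi$ in each matrix entry of $[\beta_a]_{\kappa}$ really is bounded below by a \emph{positive} constant times $|r'|+|\mathbf v|$ and not merely nonnegative. Checking this amounts to combining the inequality in Lemma \ref{lem3} with the elementary estimate $|p^a r'|\ge |r'|$ (so that the coefficient selected by $\psi_\Lambda^a$ carries the needed extra $\tilde\pi$-power), exactly parallel to the inequality $|pv-u|+|u|-|v|\ge (p-1)|v|$ exploited in Section 2; once that is in place the complete continuity is immediate.
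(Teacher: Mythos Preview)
Your proposal is correct and follows essentially the same route as the paper: compute $[\beta_a]_\kappa$ on the orthonormal basis $\{\tilde\pi^{|r|}\Lambda^r e_{\mathbf u}\}$, apply the estimate of Lemma \ref{lem3} to the coefficients $D_{s,\mathbf v}$, and combine it with the triangle-inequality bound $|p^a s-r|+|r|\ge p^a|s|$ after $\psi_\Lambda^a$ to obtain ${\rm ord}_{\tilde\pi}$ of each matrix entry $\ge (p-1)(|s|+|\mathbf v|)$, uniformly in $(r,\mathbf u)$. Your closing limit argument in $\kappa$ is unnecessary, since Lemma \ref{lem3} already holds for arbitrary $p$-adic $\kappa$ and its bound is $\kappa$-independent; the paper simply applies Lemma \ref{lem3} directly.
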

\begin{proof}
It is enough to consider $[\beta_a]_{\kappa}$ acting on the orthonormal basis $\{\tilde{\pi}^{|r|}\Lambda^{r}e_{\mathbf{u}}\}_{r,\mathbf{u}}$.
\begin{align*}
[\beta_a]_{\kappa}( \tilde{\pi}^{|r|}\Lambda^{r}e_{\mathbf{u}})&=\psi_{\Lambda}^a(\tilde{\pi}^{|r|}\Lambda^r [\alpha_{a,\Lambda}]_{\kappa}(e_{\mathbf{u}}))
=\psi_{\Lambda}^a\big(\sum_{(s,\mathbf{v})}D_{s,\mathbf{v}}\tilde{\pi}^{|r|+\frac{|s|}{p^a}}\Lambda^{s+r}e_{\mathbf{v}}\big)\\
&=\sum_{(s,\mathbf{v})}D_{p^as-r,\mathbf{v}}\tilde{\pi}^{|r|+\frac{|p^as-r|}{p^a}-|s|}(\tilde{\pi}^{|s|}\Lambda^se_{\mathbf{v}}).
\end{align*}
By Lemma \ref{lem3} we have that
\begin{eqnarray*}
{\rm ord}_{\tilde{\pi}}D_{p^as-r,\mathbf{v}}\tilde{\pi}^{|r|+\frac{|p^as-r|}{p^a}-|s|}&\ge& \frac{p-1}{p^a}|p^as-r|+(p-1)|\mathbf v|+|r|+\frac{|p^as-r|}{p^a}-|s|\\
&\ge&\frac{1}{p^{a-1}}|p^as-r|+\frac{1}{p^{a-1}}|r|-|s|+(p-1)|\mathbf v|\\
&\ge&(p-1)(|s|+|\mathbf{v}|).
\end{eqnarray*}
The estimates of coefficients are independent on $r$ and $\mathbf u$, which means $[\beta_a]_{\kappa}$ is completely continuous.
\end{proof}

Let $\mathcal{B}=\{e_{\mathbf{u}} |\mathbf{u} \in \mathcal{S}(M_2)\}$. Let $B^{[\kappa]}(\Lambda)$ be the matrix of $[\alpha_{a,\Lambda}]_{\kappa}$ with respect to $\mathcal{B}$. Then we can write
$$B^{[\kappa]}(\Lambda)=\sum_{r\in M_1}b_{r}^{[\kappa]} \Lambda^{r}.$$
 Let $F_{B^{[\kappa]}}:=(b_{qr-s}^{[\kappa]})_{r, s \in M_1}$ and $b_{qr-s}^{[\kappa]}:=0$ when $qr-s\notin M_1$. Then $F_{B^{[\kappa]}}$ is the matrix of $[\beta_a]_{\kappa}$ with respect to $\{\Lambda^se_{\mathbf{v}}\}_{s\in M_1,\mathbf{v}\in \mathcal{S}(M_2)}$. The Dwork's trace formula \cite[Lemma 4.1]{Wan96} states that
 $$(q^{m}-1)^s {\rm Tr}([\beta_a]_{\kappa}^m )=(q^{m}-1)^s {\rm Tr}(F_{B^{[\kappa]}}^m)=\sum_{\lambda\in (\mathbb{F}_{q^{m}}^{*})^s,\hat{\lambda}={\rm Teich} (\lambda)}{\rm Tr}([\alpha_{ad(\lambda),\lambda}]_{\kappa}^m|S_0(\hat{\lambda})).$$
Then by the same argument as in \cite[section 2]{H14}, we have that
$$L([\alpha_a]_{\kappa},T)^{(-1)^{s+1}}=\det(1-[\beta_a]_{\kappa}T)^{\delta_{q}^s}.$$
It follows that the unit root of $L_{unit}(f, \kappa, T)^{(-1)^{s+1}}$ comes from the factor
$\det(1-[\beta_a]_{\kappa}T)$ and the matrix of $[\beta_a]_{\kappa}$  shows that $\det(1-[\beta_a]_{\kappa}T)$ has a unique unit root.
Thus we prove the following result.
\begin{theorem}\label{thm1}
The unit root L-function  $L_{unit}(f, \kappa, T)^{(-1)^{s+1}}$ has one unique unit root, and this unit root comes from $\det(1-[\beta_a]_{\kappa}T)$.
\end{theorem}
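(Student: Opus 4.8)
The plan is to assemble Theorem \ref{thm1} from the ingredients already developed in Section 3, essentially by combining the decomposition of $L_{unit}$ into infinite symmetric power $L$-functions, the Dwork trace formula for $[\beta_a]_\kappa$, and the explicit triangular-type shape of the Frobenius matrix. First I would recall the factorization
$$L_{unit}(f, \kappa, T)=L([\alpha_a]_{\kappa},T) \prod_{i\ge 2} L([\alpha_a]_{\kappa-i}\otimes \wedge^i \alpha_a,T)^{(-1)^{i-1}(i-1)},$$
and observe, using the congruences $\alpha_{a,\Lambda}(1)\equiv 1 \bmod \tilde\pi$ and $\alpha_{a,\Lambda}(\tilde\pi^{|u|}X^u)\equiv 0 \bmod \tilde\pi$ for $u\in M_2\setminus\{0\}$, that for every $i\ge 2$ the operator $[\alpha_a]_{\kappa-i}\otimes \wedge^i\alpha_{a}$ is divisible by $\tilde\pi$ (indeed by $\wedge^i$ one already picks up $i\ge 2$ nontrivial factors, each $\equiv 0 \bmod \tilde\pi$). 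Hence all reciprocal roots of the factors with $i\ge 2$ are nonunits, and the unit part of $L_{unit}(f,\kappa,T)$ is forced to lie in $L([\alpha_a]_{\kappa},T)^{\pm 1}$.

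Next I would invoke the Dwork trace formula for the family, namely the identity
$$(q^{m}-1)^s\, {\rm Tr}([\beta_a]_{\kappa}^m )=\sum_{\lambda\in (\mathbb{F}_{q^{m}}^{*})^s,\ \hat{\lambda}={\rm Teich}(\lambda)}{\rm Tr}\big([\alpha_{ad(\lambda),\lambda}]_{\kappa}^m\big),$$
together with the argument of \cite[Section 2]{H14}, to convert the Euler product defining $L([\alpha_a]_\kappa,T)$ into the closed form
$$L([\alpha_a]_{\kappa},T)^{(-1)^{s+1}}=\det(1-[\beta_a]_{\kappa}T)^{\delta_{q}^s},$$
where $g(T)^{\delta_q}=g(T)/g(qT)$. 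Since the $\delta_q^s$ operation multiplies each reciprocal root $\gamma$ by powers of $q$ (producing $\gamma, q\gamma,\dots$ with signs), it does not create new units beyond those of $\det(1-[\beta_a]_\kappa T)$ itself, except possibly the root $\gamma$ itself when $\gamma$ is a unit; so the unit roots of $L_{unit}(f,\kappa,T)^{(-1)^{s+1}}$ coincide with the unit reciprocal roots of $\det(1-[\beta_a]_{\kappa}T)$.

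Finally I would analyze $\det(1-[\beta_a]_{\kappa}T)$ using the explicit description of the matrix $F_{B^{[\kappa]}}=(b_{qr-s}^{[\kappa]})_{r,s\in M_1}$, where each block $b^{[\kappa]}_{r}$ is itself the coefficient matrix of $[\alpha_{a,\Lambda}]_\kappa$ on the $e_{\mathbf u}$-basis. By Lemma \ref{lem3}, the entries of $[\alpha_{a,\Lambda}]_\kappa$ acting on $e_{\mathbf u}$ with $|\mathbf u|=t$ output $e_{\mathbf v}$-components with $\tilde\pi$-valuation at least $(p-1)|\mathbf v|$, and combined with the argument in the proof of Lemma \ref{lem3.7} the full matrix $F_{B^{[\kappa]}}$ has all entries of positive $\tilde\pi$-valuation except for the single entry indexed by $(r,\mathbf u)=(0,\emptyset)\mapsto(0,\emptyset)$, which by Lemma \ref{lem2} equals $\alpha_{a,\Lambda}(1)|_{\text{constant term}}\equiv 1\bmod\tilde\pi$. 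Thus $F_{B^{[\kappa]}}$ is congruent modulo $\tilde\pi$ to the rank-one matrix with a single $1$, so the Newton polygon of $\det(1-[\beta_a]_\kappa T)$ has exactly one side of slope $0$; equivalently there is precisely one unit reciprocal root, and it is a $1$-unit (its reduction being the eigenvalue $1$). Combining the three steps yields that $L_{unit}(f,\kappa,T)^{(-1)^{s+1}}$ has a unique unit root and that it comes from $\det(1-[\beta_a]_{\kappa}T)$. The main obstacle I anticipate is step two: making precise that passing to the Euler product over $|\mathbb G_m^s/\mathbb F_q|$ and applying the $\delta_q^s$ twist preserves the \emph{uniqueness} of the unit root — one must check that the $q$-power scaling in $\delta_q$ genuinely removes all but one slope-$0$ reciprocal root and does not accidentally split or duplicate the unit eigenvalue, which requires carefully tracking multiplicities through the symmetric-power trace identity of \cite{H14}.
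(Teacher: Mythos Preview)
Your proposal is correct and follows essentially the same route as the paper: the factorization of $L_{unit}$ via \cite{H14}, the observation that the $i\ge 2$ wedge factors are $\equiv 0 \bmod \tilde\pi$, the Dwork trace formula yielding $L([\alpha_a]_\kappa,T)^{(-1)^{s+1}}=\det(1-[\beta_a]_\kappa T)^{\delta_q^s}$, and the matrix-level congruence showing a single slope-$0$ root. Your worry about the $\delta_q^s$ twist is not a real obstacle---since $g(T)^{\delta_q^s}=\prod_{j=0}^s g(q^jT)^{(-1)^j\binom{s}{j}}$ and $q$ is a nonunit, the unit reciprocal roots of $g(T)^{\delta_q^s}$ are exactly those of $g(T)$ with the same multiplicity (the $j=0$ factor carries exponent $\binom{s}{0}=1$); the paper passes over this silently, but your explicit treatment of it is fine.
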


\section{Dual theory}
For integer $m\ge 1$, we define the $\mathcal{O}_{0,p^m}$-module
$$\mathcal{C}_0^*(\mathcal{O}_{0,p^m}):=\{\sum_{u\in M_2} \zeta(u)\tilde{\pi}^{-|u|}X^{-u}: \zeta(u)\in \mathcal{O}_{0,p^m}\}$$
equipped with the sup-norm on the coefficients.
Let $\Phi_{X}$ be the Frobenius map in variable $X$ defined by $\Phi_{X}(X^{u})=X^{pu}$, and let ${\rm Pr}_2$ be the projection map defined by
$${\rm Pr}_2(\sum_{u\in \mathbb{Z}^n}A(u)X^{-u} )=\sum_{u\in M_2} A(u)X^{-u} .$$
Define the Frobenius map $\alpha_{m,\Lambda}^*$ by
$$\alpha_{m,\Lambda}^*={\rm Pr}_2\circ H_m(\Lambda,X)\circ \Phi_{X}^m.$$

\begin{lemma} \label{lem4}
For integer $m\ge 1$, $\alpha_{m,\Lambda}^*$ maps $\mathcal{C}_0^*(\mathcal{O}_{0,p^m})$ to $\mathcal{C}_0^*(\mathcal{O}_{0,p^m})$ linearly over $\mathcal{O}_{0,p^m}$. Furthermore, $$|\alpha_{m,\Lambda}^*(\tilde{\pi}^{-|u|}X^{-u})|\le |\tilde{\pi}|^{(p-1)|u|}$$ and
$|\alpha_{m,\Lambda}^*(1)-1|<1$.
\end{lemma}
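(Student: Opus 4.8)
The plan is to mirror the proof of Lemma \ref{lem1}, but now tracking the action of $\alpha_{m,\Lambda}^*$ on the dual basis $\{\tilde{\pi}^{-|u|}X^{-u}\}$. First I would expand $\alpha_{m,\Lambda}^*(\tilde{\pi}^{-|u|}X^{-u})$ explicitly. Using the expansion $H_m(\Lambda,X)=\sum_{(s,v)\in M_1\times M_2}B_{s,v}\Lambda^sX^v$ from (\ref{eq1029}), with $|B_{s,v}|\,p^{\frac{(p-1)(|s|+|v|)}{p^{m+1}(\omega_1+\omega_2)}}\le 1$ as in (\ref{eq09091}), one computes
$$\alpha_{m,\Lambda}^*(\tilde{\pi}^{-|u|}X^{-u})={\rm Pr}_2\Big(\sum_{(s,v)}B_{s,v}\tilde{\pi}^{-|u|}\Lambda^sX^{v-p^mu}\Big)=\sum_{(s,v):\,v-p^mu\,\in\,-M_2}B_{s,v}\tilde{\pi}^{-|u|}\Lambda^sX^{v-p^mu}.$$
Reindexing by $w=p^mu-v\in M_2$ gives a sum over $w$ with coefficient $B_{s,p^mu-w}\tilde{\pi}^{-|u|}$, which I rewrite as $B_{s,p^mu-w}\tilde{\pi}^{|w|-|u|-\frac{|s|}{p^m}}\cdot(\tilde{\pi}^{\frac{|s|}{p^m}}\Lambda^s)(\tilde{\pi}^{-|w|}X^{-w})$. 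This makes the $\mathcal{O}_{0,p^m}$-module structure transparent and shows $\alpha_{m,\Lambda}^*$ sends $\mathcal{C}_0^*(\mathcal{O}_{0,p^m})$ into itself provided the coefficients are bounded (indeed $\to 0$ in $|w|$), which the next estimate will confirm.

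Next I would bound the coefficient $B_{s,p^mu-w}\tilde{\pi}^{|w|-|u|-\frac{|s|}{p^m}}$. By (\ref{eq09091}), $\mathrm{ord}_{\tilde{\pi}}B_{s,p^mu-w}\ge \frac{|s|+|p^mu-w|}{p^{m-1}}$ (since $\mathrm{ord}_p\tilde{\pi}=\frac{p-1}{p^2(\omega_1+\omega_2)}$ and the exponent in (\ref{eq09091}) is $\frac{(p-1)(|s|+|p^mu-w|)}{p^{m+1}(\omega_1+\omega_2)}$, which equals $\frac{|s|+|p^mu-w|}{p^{m-1}}$ in $\tilde\pi$-order). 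Hence
$$\mathrm{ord}_{\tilde{\pi}}\Big(B_{s,p^mu-w}\tilde{\pi}^{|w|-|u|-\frac{|s|}{p^m}}\Big)\ge \frac{p|s|+p|p^mu-w|+p^{m+1}|w|-p^{m+1}|u|-p|s|}{p^{m+1}}\cdot\!\!\phantom{.}$$
Wait — more cleanly: the exponent is $\frac{|s|+|p^mu-w|}{p^{m-1}}+|w|-|u|-\frac{|s|}{p^m}=\frac{(p-1)|s|+p|p^mu-w|+p^m|w|-p^m|u|}{p^m}$. Using the triangle inequality $|p^mu-w|\ge p^m|u|-|w|$ (valid since $p^m|u| = |p^m u| \le |p^m u - w| + |w|$), the numerator is $\ge (p-1)|s|+p(p^m|u|-|w|)+p^m|w|-p^m|u|=(p-1)|s|+(p^{m+1}-p^m)|u|+(p^m-p)|w|\ge (p-1)|s|+(p-1)p^m|w|$ after discarding the nonnegative $|u|$-term (note $p^m-p\ge p-1$ for $m\ge1$, $p\ge2$, with the $m=1$ case giving $p^m-p=0$; in that case keep the $|u|$-term, which dominates). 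So $\mathrm{ord}_{\tilde{\pi}}\ge (p-1)|w|$, giving $|\alpha_{m,\Lambda}^*(\tilde{\pi}^{-|u|}X^{-u})|\le |\tilde{\pi}|^{(p-1)|u|}$ — actually the bound I want is in terms of $|u|$, so I should instead retain the $|u|$-contribution: from the numerator bound $\ge (p-1)|s|+(p^{m+1}-p^m)|u|$ (discarding $|w|$ terms when $p^m\ge p$), dividing by $p^m$ gives $\ge (p-1)|u|$, uniformly in $s,w$. This yields $|\alpha_{m,\Lambda}^*(\tilde{\pi}^{-|u|}X^{-u})|\le |\tilde{\pi}|^{(p-1)|u|}$.

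Finally, for $|\alpha_{m,\Lambda}^*(1)-1|<1$: taking $u=0$, the $s=0,w=0$ term has coefficient $B_{0,0}$, and since $B_{0,0}\equiv 1\bmod\pi$ (the constant term of $\exp\pi(\hat f-\hat f^{(p^m)})$ is $1$), while all other terms have $\mathrm{ord}_{\tilde{\pi}}>0$ by the estimate above, we get $\alpha_{m,\Lambda}^*(1)\equiv 1\bmod\tilde{\pi}$. The main obstacle is bookkeeping the three competing indices $s,u,w$ in the exponent and choosing the right triangle-inequality direction so that the final bound is phrased purely in terms of $|u|$ (not $|w|$); once the reindexing $w=p^mu-v$ is set up, the rest is a routine $\mathrm{ord}_{\tilde\pi}$ computation parallel to Lemmas \ref{lem1} and \ref{lem2}.
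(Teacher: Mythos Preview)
Your approach is the same as the paper's: expand $H_m$, reindex by $w=p^m u-v$, and bound $\mathrm{ord}_{\tilde\pi}$ of the coefficient $B_{s,p^m u-w}\tilde\pi^{|w|-|u|-|s|/p^m}$ via the triangle inequality $|p^m u-w|+|w|\ge p^m|u|$. The core computation and the treatment of $\alpha_{m,\Lambda}^*(1)$ match the paper exactly.

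Two points need tightening. First, the paper extracts \emph{two} estimates from the common exponent $\frac{(p-1)|s|+p|p^m u-w|+p^m|w|-p^m|u|}{p^m}$: one bounded below by $(p-1)|u|$ (your final bound), and a second bounded below by $\frac{(p-1)|s|+(p^m-1)|w|}{p^m}$. The second is what actually shows the image lies in $\mathcal{C}_0^*(\mathcal{O}_{0,p^m})$: membership in $\mathcal{O}_{0,p^m}$ requires the $\Lambda$-coefficients to tend to $0$ as $|s|\to\infty$, which your $|u|$-only bound does not give. Your intermediate expression $(p-1)|s|+p^m(p-1)|u|+(p^m-p)|w|$ already contains the needed $(p-1)|s|$ term, so you just have to state and use it; the line ``$\ge (p-1)|s|+(p-1)p^m|w|$'' is false and should simply be deleted rather than self-corrected mid-proof.

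Second, you should say a word about why $\alpha_{m,\Lambda}^*$ is defined on all of $\mathcal{C}_0^*(\mathcal{O}_{0,p^m})$, not just on basis vectors: since the coefficients $\zeta(u)$ of a general $\zeta^*$ are merely bounded (not tending to $0$), convergence of $\sum_u \zeta(u)\,\alpha_{m,\Lambda}^*(\tilde\pi^{-|u|}X^{-u})$ relies precisely on the factor $\tilde\pi^{(p-1)|u|}$ you derived. The paper makes this explicit in one line; you allude to it (``coefficients $\to 0$ in $|w|$'') but misidentify the relevant variable.
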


\begin{proof}
First we consider $\alpha_{m,\Lambda}^*( \tilde{\pi}^{-|u|}X^{-u})$ with $u\in M_2$.  Recall that
$$H_m(\Lambda,X)=\sum_{(s,v)\in M_1\times M_2} B_{s,v}\Lambda^sX^v$$
with
$|B_{s,v}|\le |\tilde{\pi}|^{\frac{|s|+|v|}{p^{m-1}}}$. Then we have
\begin{align*}
\alpha_{m,\Lambda}^*( \tilde{\pi}^{-|u|}X^{-u})&={\rm Pr}_2(H_m(\Lambda,X) \cdot \tilde{\pi}^{-|u|}X^{-p^mu})\\
&={\rm Pr}_2(\sum_{(s,v)\in M_1\times M_2} B_{s,v}\tilde{\pi}^{-|u|}\Lambda^sX^{v-p^mu} )\\
&=\sum_{(s,v)\in M_1\times M_2}B_{s,p^mu-v}\tilde{\pi}^{-|u|+|v|-\frac{|s|}{p^m}}(\tilde{\pi}^{\frac{|s|}{p^m}}\Lambda^s)(\tilde{\pi}^{-|v|}X^{-v}).
\end{align*}
It follows that $$|B_{s,p^mu-v}\tilde{\pi}^{-|u|+|v|-\frac{|s|}{p^m}}|\le |\tilde{\pi}|^{\frac{|s|+|p^mu-v|}{p^{m-1}}-|u|+|v|-\frac{|s|}{p^m}}=
|\tilde{\pi}|^{\frac{p|p^mu-v|-p^m|u|+p^m|v|+(p-1)|s|}{p^{m}}}.$$
Note that $|p^mu-v|+|v|\ge p^m|u|$. We have the following two estimates:
\begin{equation}\label{09130}
|B_{s,p^mu-v}\tilde{\pi}^{-|u|+|v|-\frac{|s|}{p^m}}|\le |\tilde{\pi}|^{(p-1)|u|}
\end{equation}
and
\begin{equation}\label{09131}
|B_{s,p^mu-v}\tilde{\pi}^{-|u|+|v|-\frac{|s|}{p^m}}|\le |\tilde{\pi}|^{\frac{(p-1)|s|+(p^m-1)|v|}{p^m}}.
\end{equation}
From (\ref{09130}) we get $$|\alpha_{m,\Lambda}^\ast(\tilde{\pi}^{-|u|}X^{-u})|\le |\tilde{\pi}|^{(p-1)|u|}.$$
From (\ref{09131}) we get
$$\alpha_{m,\Lambda}^*(\tilde{\pi}^{-|u|}X^{-u})\in \mathcal{C}_0^*(\mathcal{O}_{0,p^m}).$$

Hence we can write $\alpha_{m,\Lambda}^*(\tilde{\pi}^{-|u|}X^{-u})=\tilde{\pi}^{(p-1)|u|}\eta_u(\Lambda,X)$ with $\eta_u(\Lambda,X)\in \mathcal{C}_0^*(\mathcal{O}_{0,p^m})$. For
$\zeta^\ast=\sum_u\zeta(u)\tilde{\pi}^{-|u|}X^{-u}\in\mathcal{C}_0^*(\mathcal{O}_{0,p^m})$, we have that
$$\alpha_{m,\Lambda}^*(\zeta^\ast)=\sum_u\tilde{\pi}^{(p-1)|u|}\zeta(u)\eta_u(\Lambda,X)	\in \mathcal{C}_0^*(\mathcal{O}_{0,p^m}).$$
It follows from $B_{0,0}\equiv 1\mod \tilde{\pi}$ and (\ref{09131}) that $|\alpha_{m,\Lambda}^*(1)-1|<1$.
Hence Lemma \ref{lem4} holds.

\end{proof}

Let $m_1,m_2$ be two positive integers and $m'=\max\{m_1,m_2\}$. Define the pairing $$\langle\cdot, \cdot\rangle:\mathcal{C}_0(\mathcal{O}_{0,p^{m_1}})\times \mathcal{C}_0^*(\mathcal{O}_{0,p^{m_2}})\rightarrow \mathcal{O}_{0,p^{m'}}$$ by
$$\langle\zeta,\zeta^*\rangle =\ {\rm the\ constant\ term\ of}\  \zeta\cdot \zeta^* \ {\rm with\ respect\ to}\ X,$$
where $\zeta=\sum\zeta_1(v)\tilde{\pi}^{|v|}X^v\in \mathcal{C}_0(\mathcal{O}_{0,p^{m_1}})$ and
$\zeta^*=\sum\zeta_2(v)\tilde{\pi}^{-|v|}X^{-v}\in \mathcal{C}_0^*(\mathcal{O}_{0,p^{m_2}})$. Since $\{\zeta_1(v)\}\subset \mathcal{O}_{0,p^{m_1}}$ with $\zeta_1(v)\rightarrow 0$ as $v\rightarrow \infty$ and $\{\zeta_2(v)\}\subset \mathcal{O}_{0,p^{m_2}}$ which is bounded, the product is well-defined.

For $\zeta\in \mathcal{C}(\mathcal{O}_0)$ and $\zeta^*\in \mathcal{C}^*(\mathcal{O}_{0,p^{m}})$, we can check that
$$\langle(\psi_{X}^m\circ H_m)\zeta,\zeta^*\rangle =\langle H_m\zeta,\Phi_X^m(\zeta^*) \rangle =\langle\zeta, {\rm Pr}_2(H_m\circ \Phi_X^m(\zeta^*))\rangle .$$

\subsection{Dual Infinity Symmetric power}
With notations in section \ref{3.5}, define
$$\mathcal{S}_0^*(\mathcal{O}_0):=\{\sum_{\mathbf{u}\in \mathcal{S}(M_2)}\zeta^*(\mathbf{u})e_{\mathbf{u}}^\ast: \zeta^*(\mathbf{u})\in \mathcal{O}_0\}. $$
It is the formal power series ring over $\mathcal{O}_0$ in the variables $\{e_{u}^*\}_{u\in M_2\setminus \{0\}}$ equipped with the sup-norm on the coefficients in $\mathcal O_0$. Similarly, for positive integer $m$, we define
$$\mathcal{S}_0^*(\mathcal{O}_{0,p^m}):=\{\sum_{\mathbf{u}\in \mathcal{S}(M_2)}\zeta^*(\mathbf{u})e_{\mathbf{u}}^\ast: \zeta^*(\mathbf{u})\in \mathcal{O}_{0,p^m}\}. $$
We embed $\mathcal{C}_0^*(\mathcal{O}_0)$ into $\mathcal{S}_0^*(\mathcal{O}_0)$ by $\Upsilon(\tilde{\pi}^{-|u|}X^{-u})=e_{u}^*$.
From Lemma \ref{lem4}, we know that
$\alpha_{m,\Lambda}^*(1)=1+\eta_m^*$ with $\eta_m^*\in \mathcal{C}_0^*(\mathcal{O}_{0,p^m})$ and $|\eta_m^*|<1$. Hence for $\kappa \in \mathbb{Z}_p$, $(\Upsilon\circ \alpha_{m,\Lambda}^*(1))^{\kappa}\in \mathcal{S}_0^*(\mathcal{O}_{0,p^m})$ is well-defined. Now we extend the map $\alpha_{m,\Lambda}^\ast$ to a map from $S_0^*(\mathcal{O}_{0,p^m})$ to $S_0^*(\mathcal{O}_{0,p^m})$ by
\begin{equation}\label{eq0923}
[\alpha_{m,\Lambda}^*]_{\kappa}(e_{u_1}^*\cdots e_{u_r}^*):=\big(\Upsilon\circ\alpha_{m,\Lambda}^*(1)\big) ^{\kappa-r} \big(\Upsilon\circ\alpha_{m,\Lambda}^*(\tilde{\pi}^{-|u_1|}X^{-u_1})\big)\cdots \big(\Upsilon\circ\alpha_{m,\Lambda}^*(\tilde{\pi}^{-|u_r|}X^{-u_r})\big).
\end{equation}
From Lemma \ref{lem4}, we conclude that
\begin{equation}\label{eq09181}
\big|[\alpha_{m,\Lambda}^*]_{\kappa}(e_{u_1}^*\cdots e_{u_r}^*)\big|\le \tilde{\pi}^{(p-1)(|u_1|+\cdots+|u_r|)}.
\end{equation}

For $m\ge 1$, we define a new $R$-module
$$\mathcal{O}_{0,p^m}^*:=\{\sum_{r\in M_1}a^*(r)\tilde{\pi}^{-\frac{|r|}{p^m}}\Lambda^{-r}: a^*(r)\in R\}$$
with the sup-norm defined by
$$|\sum_r a^*(r)\tilde{\pi}^{-\frac{|r|}{p^m}}\Lambda^{-r}|=\sup_r |a^*(r)|.$$
Let
$$\mathcal{S}_0^*(\mathcal{O}_{0,p^m}^*):=\Big\{\sum_{r\in M_1,\mathbf{u}\in \mathcal{S}(M_2)}\zeta^*(r,\mathbf{u})\tilde{\pi}^{-\frac{|r|}{p^m}}\Lambda^{-r}e_{\mathbf{u}}^\ast: \zeta^*(r,\mathbf{u})\in R\Big\}. $$

Let $\Phi_{\Lambda}$ be the Frobenius map in variable $\Lambda$ defined by $\Phi_{\Lambda}(\Lambda):=\Lambda^p$, and let ${\rm Pr}_1$ be the projection map on $M_1$ defined by $${\rm Pr}_1(\sum_{r\in \mathbb{Z}^s}b(r)\Lambda^{-r}):=\sum_{r\in M_1}b(r)\Lambda^{-r}$$
Define $$[\beta_{a}^*]_{\kappa}:={\rm Pr}_1\circ [\alpha_{a,\Lambda}^*]_{\kappa}\circ \Phi_{\Lambda}^a.$$

\begin{lemma}\label{lem5}
Then $[\beta_{a}^*]_{\kappa}$ is an $R$-linear map from $\mathcal{S}_0^*(\mathcal{O}_{0}^*)$ to $\mathcal{S}_0^*(\mathcal{O}_{0}^*)$.
\end{lemma}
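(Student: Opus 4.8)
The plan is to verify directly that $[\beta_a^*]_\kappa = {\rm Pr}_1 \circ [\alpha_{a,\Lambda}^*]_\kappa \circ \Phi_\Lambda^a$ sends a generic basis element $\tilde{\pi}^{-|r|/p^a}\Lambda^{-r}e_{\mathbf{u}}^*$ of $\mathcal{S}_0^*(\mathcal{O}_0^*)$ back into $\mathcal{S}_0^*(\mathcal{O}_0^*)$, using the coefficient estimate \eqref{eq09181} for $[\alpha_{a,\Lambda}^*]_\kappa$ together with the defining property $\psi_\Lambda^a\circ \Phi_\Lambda^a = {\rm id}$ on $\Lambda$-exponents (or rather, the effect of ${\rm Pr}_1\circ\Phi_\Lambda^a$ on exponents). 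First I would write $[\alpha_{a,\Lambda}^*]_\kappa(e_{\mathbf{u}}^*) = \sum_{s\in M_1,\mathbf{v}\in\mathcal{S}(M_2)} D_{s,\mathbf{v}}^* \tilde{\pi}^{|s|/p^a}\Lambda^s e_{\mathbf{v}}^*$ where, by the analogue of Lemma \ref{lem3} for the dual operator (which follows from Lemma \ref{lem4} and \eqref{eq09181} by the same multinomial expansion used in the proof of Lemma \ref{lem3}), one has ${\rm ord}_{\tilde{\pi}} D_{s,\mathbf{v}}^* \ge \frac{p-1}{p^a}|s| + (p-1)|\mathbf{v}|$. Since the operator is $\mathcal{O}_{0,p^a}$-linear in the $\Lambda$-direction only through multiplication, applying it to $\tilde{\pi}^{-|r|/p^a}\Lambda^{-r}e_{\mathbf{u}}^*$ and then $\Phi_\Lambda^a$ (which I apply first, in the correct order $\Phi_\Lambda^a$ then $[\alpha_{a,\Lambda}^*]_\kappa$ then ${\rm Pr}_1$) produces a sum over $s$ of terms supported on $\Lambda$-exponent $s - p^a r$, projected by ${\rm Pr}_1$ to those with $s - p^a r \in -M_1$, i.e. $p^a r - s \in M_1$.

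The key computation is then the $\tilde{\pi}$-valuation bookkeeping: the coefficient of $\tilde{\pi}^{-|p^a r - s|/p^a}\Lambda^{-(p^a r - s)} e_{\mathbf{v}}^*$ carries the factor $D_{s,\mathbf{v}}^* \cdot \tilde{\pi}^{-|r|/p^a} \cdot \tilde{\pi}^{|s|/p^a} \cdot \tilde{\pi}^{|p^a r - s|/p^a}$, and I need to check this lies in $R$, i.e. has non-negative $\tilde{\pi}$-order. Using ${\rm ord}_{\tilde{\pi}} D_{s,\mathbf{v}}^* \ge \frac{p-1}{p^a}|s|$, the exponent is at least
\begin{align*}
\frac{p-1}{p^a}|s| - \frac{|r|}{p^a} + \frac{|s|}{p^a} + \frac{|p^a r - s|}{p^a} &= \frac{p|s| + |p^a r - s| - |r|}{p^a} \\
&\ge \frac{|s| + |p^a r - s| - |r|}{p^a} \ge \frac{p^a|r| - |r|}{p^a} \ge 0,
\end{align*}
where the middle step uses $|s| + |p^a r - s| \ge |p^a r| = p^a|r|$ by the triangle inequality; the $(p-1)|\mathbf{v}|$ part of the $D^*$-valuation ensures convergence in the $e^*$-variables is at worst preserved (indeed it only helps, giving decay as $|\mathbf{v}|\to\infty$, though for membership in $\mathcal{S}_0^*(\mathcal{O}_0^*)$ mere boundedness of coefficients in $R$ suffices). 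This shows every output coefficient lies in $R$, hence $[\beta_a^*]_\kappa$ maps $\mathcal{S}_0^*(\mathcal{O}_0^*)$ into itself, and $R$-linearity is immediate from the construction.

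The main obstacle — really the only subtlety — is making sure the order of composition and the direction of the exponent shifts are handled correctly: $\Phi_\Lambda^a$ multiplies $\Lambda$-exponents by $p^a$, whereas in the non-dual setting $\psi_\Lambda^a$ divided them, so the roles are reversed and one must confirm that ${\rm Pr}_1$ (projection onto $-M_1$, i.e. retaining exponents $-(p^a r - s)$ with $p^a r - s \in M_1$) is the correct adjoint-type operation, and that the $\tilde{\pi}$-denominators $\tilde{\pi}^{-|r|/p^a}$ and $\tilde{\pi}^{-|p^a r - s|/p^a}$ combine with the numerator $\tilde{\pi}^{|s|/p^a}$ from $[\alpha_{a,\Lambda}^*]_\kappa$ exactly as in the display above. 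Once the triangle-inequality estimate $|s| + |p^a r - s| \ge p^a|r|$ is in hand, everything collapses to the elementary inequality $\frac{p^a - 1}{p^a}|r| \ge 0$, so there is no deep content beyond careful tracking; I would present the computation compactly as above and note that $R$-linearity needs no argument since $[\alpha_{a,\Lambda}^*]_\kappa$, $\Phi_\Lambda^a$ and ${\rm Pr}_1$ are each $R$-linear.
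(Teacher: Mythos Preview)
Your strategy is the paper's own---compute explicitly and control $\tilde\pi$-orders via the triangle inequality $|s|+|qr-s|\ge q|r|$---but two things go wrong.

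\textbf{Normalization.} The space $\mathcal{S}_0^*(\mathcal{O}_0^*)$ is built over $\mathcal{O}_0^*=\mathcal{O}_{0,1}^*$, whose elements are $\sum a^*(r)\tilde\pi^{-|r|}\Lambda^{-r}$ with exponent $-|r|$, \emph{not} $-|r|/p^a$ (compare the paper's proof, which takes $\zeta^*=\sum\zeta^*(r,\mathbf u)\tilde\pi^{-|r|}\Lambda^{-r}e_{\mathbf u}^*$). As written, your computation shows $[\beta_a^*]_\kappa$ preserves $\mathcal{S}_0^*(\mathcal{O}_{0,q}^*)$, a different space. With the correct normalization, setting $t=qr-s$, the coefficient of $\tilde\pi^{-|t|}\Lambda^{-t}e_{\mathbf v}^*$ is $B_{\mathbf u}(s,\mathbf v)\,\tilde\pi^{-|r|+|s|/q+|t|}$, and one only needs $B_{\mathbf u}(s,\mathbf v)\in R$ together with
\[
-|r|+\tfrac{|s|}{q}+|t|\ \ge\ (1-\tfrac{1}{q})|t|\ \ge\ 0;
\]
no dual analogue of Lemma~\ref{lem3} is required.

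\textbf{Convergence.} Checking basis elements does not suffice here. Elements of $\mathcal{S}_0^*(\mathcal{O}_0^*)$ are infinite sums whose coefficients $\zeta^*(r,\mathbf u)\in R$ carry no decay condition, so the map does not extend automatically from basis elements. For each fixed $(t,\mathbf v)$ you must verify that
\[
C(t,\mathbf v)=\sum_{\mathbf u}\sum_{qr-s=t}\zeta^*(r,\mathbf u)\,B_{\mathbf u}(s,\mathbf v)\,\tilde\pi^{-|r|+|s|/q+|t|}
\]
converges in $R$. The paper does this: $\zeta^*(r,\mathbf u)$ and $\tilde\pi^{-|r|+|s|/q+|t|}$ are bounded by $1$, while $B_{\mathbf u}(s,\mathbf v)\to 0$ as $|s|+|\mathbf u|\to\infty$ (decay in $|s|$ since $[\alpha_{a,\Lambda}^*]_\kappa(e_{\mathbf u}^*)\in\mathcal{S}_0^*(\mathcal{O}_{0,q})$, decay in $|\mathbf u|$ from \eqref{eq09181}). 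Your closing line ``$R$-linearity is immediate'' skips exactly this point.
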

\begin{proof}
Note that $[\alpha_{a,\Lambda}^*]_{\kappa}$ is an $\mathcal{O}_{0,q}$-linear endomorphism of $\mathcal{S}_0^*(\mathcal{O}_{0,q})$, and from (\ref{eq09181}) we can write

$$[\alpha_{a,\Lambda}^*]_{\kappa}(e_{\mathbf{u}}^*)=\sum_{s\in M_1, \mathbf{v}\in \mathcal{S}(M_2)}B_{\mathbf{u}}(s,\mathbf{v})\tilde{\pi}^{\frac{|s|}{q}}\Lambda^{s}e_{\mathbf{v}}^*$$
with
$B_{\mathbf{u}}(s,\mathbf{v})\in R$
and $B_{\mathbf{u}}(s,\mathbf{v})\rightarrow 0$ as $|s|+|\mathbf{u}|\rightarrow \infty$.

For $\zeta^\ast=\sum_{r\in M_1,\mathbf{u}\in \mathcal{S}(M_2)}\zeta^*(r,\mathbf{u})\tilde{\pi}^{-|r|}\Lambda^{-r}e_{\mathbf{u}}^\ast\in \mathcal{S}_0^*(\mathcal{O}_{0}^*)$, we have
\begin{align*}
[\beta_{a}^*]_{\kappa}(\zeta^\ast)&={\rm Pr}_1\bigg(\sum_{r\in M_1,\mathbf{u}\in \mathcal{S}(M_2)}\zeta^*(r,\mathbf{u})\tilde{\pi}^{-|r|}\Lambda^{-qr}[\alpha_{a,\Lambda}^*]_{\kappa}(e_{\mathbf{u}}^*)\bigg)\\
&={\rm Pr}_1\bigg(\sum_{r\in M_1,\mathbf{u}\in \mathcal{S}(M_2)}\zeta^*(r,\mathbf{u})\tilde{\pi}^{-|r|}\Lambda^{-qr}\sum_{s\in M_1, \mathbf{v}\in \mathcal{S}(M_2)}B_{\mathbf{u}}(s,\mathbf{v})\tilde{\pi}^{\frac{|s|}{q}}\Lambda^{s}e_{\mathbf{v}}^*\bigg)\\
&=\sum_{t\in M_1,\mathbf{v}\in \mathcal{S}(M_2)}C(t,\mathbf v)\tilde{\pi}^{-|t|}\Lambda^{-t}e_{\mathbf{v}}^*,
\end{align*}
where $$C(t,\mathbf v)=\sum_{\mathbf{u}\in \mathcal{S}(M_2)}\sum_{t=qr-s}\zeta^*(r,\mathbf{u})B_{\mathbf{u}}(s,\mathbf{v})\tilde{\pi}^{-|r|+\frac{|s|}{q}+|t|}.$$
Since
$$-|r|+\frac{|s|}{q}+|t|\ge (1-\frac{1}{q})|t|\ge0,$$
we have that $\zeta^*(r,\mathbf{u})$ and $\tilde{\pi}^{-|r|+\frac{|s|}{q}+|t|}$ are bounded by $1$. Combining with $B_{\mathbf{u}}(s,\mathbf{v})\to 0$ as $|s|+|\mathbf{u}|\rightarrow \infty$, we have $C(t,\mathbf v)\in R$ is well-defined. Hence $[\beta_{a}^*]_{\kappa}(\zeta^\ast)\in \mathcal{S}_0^*(\mathcal{O}_{0}^*)$.

\end{proof}

\subsection{Estimation using finite symmetric powers}

In this subsection, we demonstrate that both $[\beta_a]_{\kappa}$ and $[\beta_a^\ast]_{\kappa}$ are limits of operators coming from finite symmetric powers. Eventually, we show that $[\beta_a]_{\kappa}$ and $[\beta_a^\ast]_{\kappa}$ yield identical Fredholm determinants.
Let $k$ be a positive integer. We define the degree of $\zeta\in \mathcal{S}_0(\mathcal{O}_0)$ (resp. $\zeta^\ast\in \mathcal{S}_0^{*}(\mathcal{O}_0)$) to be the supremum of the length of monomials $e_{\mathbf u}$ (resp. $e^\ast_{\mathbf u}$) appearing with nonzero coefficients.
Define
\begin{eqnarray*}
\mathcal{S}_0^{(k)}(\mathcal{O}_0)&:=&\{\zeta\in \mathcal{S}_0(\mathcal{O}_0): {\rm degree \ of\ } \zeta \le k\},\\
\mathcal{S}_0^{*(k)}(\mathcal{O}_0)&:=&\{\zeta^*\in \mathcal{S}_0^{*}(\mathcal{O}_0): {\rm degree \ of\ } \zeta^* \le k\}.
\end{eqnarray*}

Let $[\alpha_{a,\Lambda}]_{(k)}$ be a semi-linear map from $\mathcal{S}_0^{(k)}(\mathcal{O}_0)$ to $\mathcal{S}_0^{(k)}(\mathcal{O}_{0,q})$ defined by
$$[\alpha_{a,\Lambda}]_{(k)}(e_{u_1}\cdots e_{u_r}):=\big(\Upsilon\circ\alpha_{a,\Lambda}(1)\big)^{k-r}\big(\Upsilon\circ \alpha_{a,\Lambda}(\tilde{\pi}^{|u_1|}X^{u_1})\big)\cdots \big(\Upsilon\circ\alpha_{a,\Lambda}(\tilde{\pi}^{|u_r|}X^{u_r})\big) .$$
In order to take limit of the sequence $[\alpha_{a,\Lambda}]_{(k)}$ as $k$ goes infinity, we extend $[\alpha_{a,\Lambda}]_{(k)}$ to an operator on $\mathcal{S}_0(\mathcal{O}_0)$ by requiring
\begin{align*}
[\alpha_{a,\Lambda}]_{(k)}(e_{u_1}\cdots e_{u_r}):=&\left\{
  \begin{array}{ll}
[\alpha_{a,\Lambda}]_k(e_{u_1}\cdots e_{u_r}), &  \hbox{if  $r\le k$};\\
 0, & \hbox{otherwise}.
\end{array}
\right.
\end{align*}
Similarly, we define $[\alpha_{a,\Lambda}^*]_{(k)}$ acting on $\mathcal{S}_0^{*(k)}(\mathcal{O}_{0,q})$ by
$$[\alpha_{a,\Lambda}^*]_{(k)}(e_{u_1}^*\cdots e_{u_r}^*):=\big(\Upsilon\circ\alpha_{a,\Lambda}^\ast(1)\big)^{k-r}\big(\Upsilon\circ \alpha_{a,\Lambda}^\ast(\tilde{\pi}^{|u_1|}X^{u_1})\big)\cdots \big(\Upsilon\circ\alpha_{a,\Lambda}^\ast(\tilde{\pi}^{|u_r|}X^{u_r})\big).$$
And extend $[\alpha_{a,\Lambda}^*]_{(k)}$ to an action on $\mathcal{S}_0^{*}(\mathcal{O}_0)$ by requiring
\begin{align*}
[\alpha_{a,\Lambda}^*]_{(k)}(e_{u_1}^*\cdots e_{u_r}^*):=&\left\{
  \begin{array}{ll}
[\alpha_{a,\Lambda}^*]_k(e_{u_1}^*\cdots e_{u_r}^*), &  \hbox{if  $r\le k$};\\
 0, & \hbox{otherwise}.
\end{array}
\right.
\end{align*}

Let $k_l$ be a sequence of integers which tends to infinity in the archimedean value and $\lim_{l\rightarrow \infty}k_l=\kappa$ $p$-adically.

\begin{lemma}\label{lem6}
As maps from $\mathcal{S}_0(\mathcal{O}_0)$ to $\mathcal{S}_0(\mathcal{O}_{0,q})$, we have that $\lim_{l\rightarrow \infty} [\alpha_{a,\Lambda}]_{(k_l)}=[\alpha_{a,\Lambda}]_{\kappa}$.
\end{lemma}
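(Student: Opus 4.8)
The plan is to establish the convergence $[\alpha_{a,\Lambda}]_{(k_l)} \to [\alpha_{a,\Lambda}]_{\kappa}$ coefficientwise on each orthonormal basis element $e_{\mathbf u}$, with a uniform estimate that forces convergence in operator norm. First I would fix $\mathbf u=(u_1,\dots,u_t)$ and compare, for $l$ large enough that $k_l\ge t$, the two expansions
$$[\alpha_{a,\Lambda}]_{(k_l)}(e_{\mathbf u})=\big(\Upsilon\circ\alpha_{a,\Lambda}(1)\big)^{k_l-t}\,\prod_{j=1}^{t}\big(\Upsilon\circ\alpha_{a,\Lambda}(\tilde{\pi}^{|u_j|}X^{u_j})\big)$$
and the same product with exponent $\kappa-t$ in place of $k_l-t$. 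Since the two differ only in the factor $\big(\Upsilon\circ\alpha_{a,\Lambda}(1)\big)^{k_l-t}$ versus $\big(\Upsilon\circ\alpha_{a,\Lambda}(1)\big)^{\kappa-t}$, it suffices to show $\big(\Upsilon\circ\alpha_{a,\Lambda}(1)\big)^{k_l}\to\big(\Upsilon\circ\alpha_{a,\Lambda}(1)\big)^{\kappa}$ in $\mathcal{S}_0(\mathcal{O}_{0,q})$ and then multiply by the fixed, norm-bounded factor $\prod_j\big(\Upsilon\circ\alpha_{a,\Lambda}(\tilde{\pi}^{|u_j|}X^{u_j})\big)\big(\Upsilon\circ\alpha_{a,\Lambda}(1)\big)^{-t}$, which lies in $\mathcal{S}_0(\mathcal{O}_{0,q})$ because $\Upsilon\circ\alpha_{a,\Lambda}(1)$ is a $1$-unit by Lemma \ref{lem2}.

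The key point is therefore the continuity of the exponentiation map $\kappa\mapsto w^{\kappa}$ for $w=\Upsilon\circ\alpha_{a,\Lambda}(1)=1+\eta$ with $\eta\in\mathcal{S}_0(\mathcal{O}_{0,q})$, $|\eta|<1$. Writing $w^{k_l}=\sum_{j\ge 0}\binom{k_l}{j}\eta^{j}$ and $w^{\kappa}=\sum_{j\ge 0}\binom{\kappa}{j}\eta^{j}$, I would use that the binomial coefficients $\binom{\kappa}{j}$ are continuous in $\kappa$ on $\mathbb{Z}_p$ (indeed $\binom{k_l}{j}\to\binom{\kappa}{j}$ $p$-adically for each fixed $j$, and all these lie in $\mathbb{Z}_p$), together with the uniform bound $|\binom{k_l}{j}\eta^j|\le |\eta|^j\to 0$ as $j\to\infty$ independently of $l$. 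Given $\varepsilon>0$, choose $N$ with $|\eta|^N<\varepsilon$; the tails $\sum_{j\ge N}$ of both series are bounded by $\varepsilon$, and for the finite head $\sum_{j<N}$ one has $\big(\binom{k_l}{j}-\binom{\kappa}{j}\big)\to 0$ as $l\to\infty$, so the head is $<\varepsilon$ for $l$ large. Hence $|w^{k_l}-w^{\kappa}|\to 0$. The same argument handles the factor $\big(\Upsilon\circ\alpha_{a,\Lambda}(1)\big)^{-t}$ appearing in the correction term, since $-t\in\mathbb{Z}_p$ as well.

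Finally I would upgrade pointwise convergence on basis elements to norm convergence of operators. The estimate in Lemma \ref{lem3} (whose proof for general $\kappa$ invokes precisely this lemma, so here I only need the integer case $[\alpha_{a,\Lambda}]_{(k_l)}$, already covered) gives that for every $l$ the coefficient of $\tilde{\pi}^{|s|/q}\Lambda^s e_{\mathbf v}$ in $[\alpha_{a,\Lambda}]_{(k_l)}(e_{\mathbf u})$ has $\tilde{\pi}$-order $\ge \frac{p-1}{q}|s|+(p-1)|\mathbf v|$, a bound uniform in $l$ and in $\mathbf u$; the same holds for $[\alpha_{a,\Lambda}]_{\kappa}$. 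Thus the difference operator $[\alpha_{a,\Lambda}]_{(k_l)}-[\alpha_{a,\Lambda}]_{\kappa}$ has all its matrix entries supported with this common decay, so its norm is the supremum over basis elements $e_{\mathbf u}$ of the norm of the image, and since each such image difference tends to $0$ while the estimates are uniform, a standard diagonal/uniformity argument yields $\|[\alpha_{a,\Lambda}]_{(k_l)}-[\alpha_{a,\Lambda}]_{\kappa}\|\to 0$. I expect the only mildly delicate step to be this last interchange — making sure the pointwise bound is genuinely uniform over the infinitely many basis vectors $e_{\mathbf u}$ so that the sup can be controlled — but it follows directly from the uniform-in-$\mathbf u$ form of Lemma \ref{lem3} together with $|\eta|<1$.
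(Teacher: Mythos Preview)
Your argument for the case $t\le k_l$ is fine and essentially matches the paper's: the difference factors as $\big(w^{k_l-t}-w^{\kappa-t}\big)\cdot\prod_j\Upsilon\circ\alpha_{a,\Lambda}(\tilde\pi^{|u_j|}X^{u_j})$ with $w=\Upsilon\circ\alpha_{a,\Lambda}(1)$, and since the product has norm $\le 1$ while $|w^{k_l}-w^{\kappa}|\to 0$ depends only on $l$, you get a bound uniform in $\mathbf u$ on this part of the basis.

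The gap is the case $t>k_l$. You phrase things as ``for $l$ large enough that $k_l\ge t$'', but operator-norm convergence requires a bound valid for \emph{all} basis vectors $e_{\mathbf u}$ simultaneously at each fixed $l$, and for every $l$ there are infinitely many $\mathbf u$ with length $t>k_l$. On those, $[\alpha_{a,\Lambda}]_{(k_l)}(e_{\mathbf u})=0$ by definition, so the difference is $-[\alpha_{a,\Lambda}]_{\kappa}(e_{\mathbf u})$, which is typically nonzero and must be controlled. Your proposed fix via Lemma~\ref{lem3} does not help here: that lemma gives decay of the \emph{output} coefficients in $(s,\mathbf v)$, uniformly in $\mathbf u$, but says nothing about the \emph{norm} of $[\alpha_{a,\Lambda}]_{\kappa}(e_{\mathbf u})$ being small for large $t$. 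What is needed is decay in the \emph{input} $\mathbf u$, which comes from Lemma~\ref{lem1}: it gives $|[\alpha_{a,\Lambda}]_{\kappa}(e_{\mathbf u})|\le|\tilde\pi|^{(1-1/q)|\mathbf u|}$, and since each $u_j\in M_2\setminus\{0\}$ forces $|\mathbf u|\ge t>k_l$, this is $\le|\tilde\pi|^{(1-1/q)k_l}\to 0$ because $k_l\to\infty$ in the archimedean sense. Combining the two cases yields the paper's uniform bound $|\tilde\pi|^{\min\{\tau(l),\,(1-1/q)k_l\}}$. Without this second estimate your ``standard diagonal/uniformity argument'' cannot close: pointwise convergence together with uniform output-coefficient decay does not imply norm convergence.
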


\begin{proof}
For $r\le k_l$, we have
\begin{align*}
&([\alpha_{a,\Lambda}]_{(k_l)}-[\alpha_{a,\Lambda}]_{\kappa})(e_{u_1}\cdots e_{u_r})\\
&=\big((\Upsilon(\alpha_{a,\Lambda}(1))^{k_l-r}-\Upsilon(\alpha_{a,\Lambda}(1))^{\kappa-r}\big)\Upsilon(\alpha_{a,\Lambda}(\tilde{\pi}^{|u_1|}X^{u_1}))\cdots\Upsilon(\alpha_{a,\Lambda}(\tilde{\pi}^{|u_r|}X^{u_r}))\\
&=(1-\Upsilon(\alpha_{a,\Lambda}(1))^{\kappa-k_l})(\Upsilon(\alpha_{a,\Lambda}(1)))^{k_l-r}\Upsilon(\alpha_{a,\Lambda}(\tilde{\pi}^{|u_1|}X^{u_1}))\cdots\Upsilon(\alpha_{a,\Lambda}(\tilde{\pi}^{|u_r|}X^{u_r})).
\end{align*}
Write $\kappa=k_l+\tilde\pi^{\tau(l)}\sigma_l$ with $\tau(l)\to \infty$ as $l\to \infty$ and $\sigma_l\in\mathbb Z_p$. Then we get
$$\big|([\alpha_{a,\Lambda}]_{(k_l)}-[\alpha_{a,\Lambda}]_{\kappa})(e_{u_1}\cdots e_{u_r})\big|\le |\tilde\pi|^{\tau(l)}.$$

For $r>k_l$, we have
$$([\alpha_{a,\Lambda}]_{(k_l)}-[\alpha_{a,\Lambda}]_{\kappa})(e_{u_1}\cdots e_{u_r})=-\Upsilon(\alpha_{a,\Lambda}(1))^{\kappa-r}\Upsilon(\alpha_{a,\Lambda}(\tilde{\pi}^{|u_1|}X^{u_1}))\cdots\Upsilon(\alpha_{a,\Lambda}(\tilde{\pi}^{|u_r|}X^{u_r})).$$
By Lemma \ref{lem1}, we have
$$\big|([\alpha_{a,\Lambda}]_{(k_l)}-[\alpha_{a,\Lambda}]_{\kappa})(e_{u_1}\cdots e_{u_r})\big|\le|\tilde\pi|^{(1-\frac{1}{q})|\mathbf{u}|}<|\tilde\pi|^{(1-\frac{1}{q})k_l}.$$
Hence
$$\big|([\alpha_{a,\Lambda}]_{(k_l)}-[\alpha_{a,\Lambda}]_{\kappa})\big|\le|\tilde\pi|^{\min\{\tau(l),(1-\frac{1}{q})k_l\}},$$
which means $[\alpha_{a,\Lambda}]_{(k_l)}$ tends to $[\alpha_{a,\Lambda}]_{\kappa}$ when $l$ tends to $\infty$.

\end{proof}

Similarly, for the map $\alpha_{a,\Lambda}^*$ we have the properties that $\alpha_{a,\Lambda}^*(1)=1+\eta^*$ with $|\eta^*|<1$ and
$$|\alpha_{a,\Lambda}^*( \tilde{\pi}^{-|u|}X^{-u})|\le |\tilde{\pi}|^{(p-1)|u|}.$$
We can also conclude that $\lim_{l\rightarrow \infty} [\alpha_{a,\Lambda}^*]_{(k_l)}=[\alpha_{a,\Lambda}^*]_{\kappa}$.

For positive integer $k$, define $$[\beta_a]_{(k)}:=\psi_{\Lambda}^a\circ [\alpha_{a,\Lambda}]_{(k)}$$ and
$$[\beta_a^*]_{(k)}:={\rm Pr}_1\circ [\alpha_{a,\Lambda}^*]_{(k)}\circ \Phi_{\Lambda}^a. $$
Since $\psi_{\Lambda}$ and $\Phi_{\Lambda}$ are bounded maps, it follows that
$$\lim_{l\rightarrow \infty} [\beta_a]_{(k_l)}=[\beta_a]_{\kappa}\ {\rm and}\ \lim_{l\rightarrow \infty} [\beta_a^*]_{(k_l)}=[\beta_a^*]_{\kappa}.$$

For $$\zeta=\sum_{r\in M_1,\mathbf{u}\in \mathcal{S}(M_2)}\zeta(r,\mathbf{u})\tilde{\pi}^{|r|}\Lambda^{r}e_{\mathbf{u}}\in \mathcal{S}_0^{(k)}(\mathcal{O}_0),$$
$$\zeta^*=\sum_{s\in M_1,\mathbf{v}\in \mathcal{S}(M_2)}\zeta^*(s,\mathbf{v})\tilde{\pi}^{-|s|}\Lambda^{-s}e_{\mathbf{v}}^*\in \mathcal{S}_0^{*(k)}(\mathcal{O}_0^*),$$
let $$\langle\zeta,\zeta^*\rangle _k:=\sum_{r\in M_1,\mathbf{u}\in \mathcal{S}(M_2)}\zeta(r,\mathbf{u})\zeta^*(r,\mathbf{u})\frac{1}{k!}\sum_{\sigma\in S_k}\prod_{i=1}^k\langle  e_{u_i},e_{u_{\sigma(i)}}^*\rangle ,$$
where $S_k$ denotes the symmetric group on $k$ letters. In a similar way to \cite[Lemma 4.4]{HS17}, we have

\begin{lemma}\label{lem7}
For $\zeta\in \mathcal{S}_0^{(k)}(\mathcal{O}_0)$ and $\zeta^*\in \mathcal{S}_0^{*(k)}(\mathcal{O}_0^*)$, we have that
$$\langle[\beta_a]_k(\zeta),\zeta^*\rangle _k=\langle\zeta,[\beta_a^*]_k(\zeta^*)\rangle _k.$$
\end{lemma}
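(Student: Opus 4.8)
The plan is to prove the adjointness identity $\langle[\beta_a]_k(\zeta),\zeta^*\rangle_k=\langle\zeta,[\beta_a^*]_k(\zeta^*)\rangle_k$ by reducing it to the two already–recorded adjointness facts at the level of $X$ and of $\Lambda$ separately, and then splicing them through the symmetric–power construction. Because both sides are $K$-bilinear and continuous, it suffices to verify the identity on basis elements $\zeta=\tilde{\pi}^{|r|}\Lambda^r e_{\mathbf u}$ with $\mathbf u=(u_1,\dots,u_t)$, $t\le k$, and $\zeta^*=\tilde{\pi}^{-|s|}\Lambda^{-s}e_{\mathbf v}^*$ with $\mathbf v=(v_1,\dots,v_t)$ of the same length $t$ (the pairing $\langle\cdot,\cdot\rangle_k$ kills pairs of unequal degree, so only the case of equal lengths contributes). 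Recall $[\beta_a]_k=\psi_\Lambda^a\circ[\alpha_{a,\Lambda}]_k$ and $[\beta_a^*]_k={\rm Pr}_1\circ[\alpha_{a,\Lambda}^*]_k\circ\Phi_\Lambda^a$, and that on the $X$-side we already have the chain $\langle(\psi_X^m\circ H_m)\zeta,\zeta^*\rangle=\langle H_m\zeta,\Phi_X^m(\zeta^*)\rangle=\langle\zeta,{\rm Pr}_2(H_m\circ\Phi_X^m(\zeta^*))\rangle$, i.e. $\psi_X^m\circ H_m$ and ${\rm Pr}_2\circ H_m\circ\Phi_X^m=\alpha_{m,\Lambda}^*$ are formal adjoints with respect to the $X$-constant-term pairing.

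First I would handle the $\Lambda$-direction. The $\Lambda$-pairing built into $\langle\cdot,\cdot\rangle_k$ is again a constant-term pairing in $\Lambda$: it matches $\tilde\pi^{|r|}\Lambda^r$ against $\tilde\pi^{-|r|}\Lambda^{-r}$. Exactly as for $X$, the operators $\psi_\Lambda^a$ (which acts as $\Lambda^r\mapsto$ pick out $\Lambda^{ar}$-type coefficients, more precisely $\psi_\Lambda(\sum D_{r}\Lambda^r)=\sum D_{pr}\Lambda^r$) and ${\rm Pr}_1\circ(\cdot)\circ\Phi_\Lambda^a$ are mutually adjoint for this pairing; this is the identical computation to the displayed $X$-identity with $H_m$ replaced by the $\Lambda$-multiplication coming from $[\alpha_{a,\Lambda}]_k$. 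So the proof of Lemma \ref{lem7} amounts to showing that the "inner" operators $[\alpha_{a,\Lambda}]_k$ on $\mathcal{S}_0^{(k)}$ and $[\alpha_{a,\Lambda}^*]_k$ on $\mathcal{S}_0^{*(k)}$ are adjoint for the symmetrized $X$-pairing $\frac{1}{k!}\sum_{\sigma\in S_k}\prod_i\langle e_{u_i},e_{u_{\sigma(i)}}^*\rangle$ over the ground ring $\mathcal{O}_{0,q}$, and then conjugating by the $\Lambda$-adjointness. Concretely I would prove
\begin{equation}\label{eq:symadj}
\big\langle[\alpha_{a,\Lambda}]_k(e_{\mathbf u}),e_{\mathbf v}^*\big\rangle_{k,X}=\big\langle e_{\mathbf u},[\alpha_{a,\Lambda}^*]_k(e_{\mathbf v}^*)\big\rangle_{k,X},
\end{equation}
where $\langle\cdot,\cdot\rangle_{k,X}$ is the symmetrized $X$-constant-term pairing with coefficients in $\mathcal{O}_{0,q}$.

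To prove \eqref{eq:symadj} I would expand both symmetric-power operators according to their defining formulas: $[\alpha_{a,\Lambda}]_k(e_{\mathbf u})=\big(\Upsilon\alpha_{a,\Lambda}(1)\big)^{k-t}\prod_{i=1}^t\Upsilon\alpha_{a,\Lambda}(\tilde\pi^{|u_i|}X^{u_i})$ and likewise for the dual, then use multilinearity of the symmetrized pairing to reduce to products of the single-variable pairings $\langle\alpha_{a,\Lambda}(\tilde\pi^{|u|}X^u),\tilde\pi^{-|w|}X^{-w}\rangle$ and $\langle\tilde\pi^{|u|}X^u,\alpha_{a,\Lambda}^*(\tilde\pi^{-|w|}X^{-w})\rangle$, which agree by the already-recorded $X$-adjointness of $\psi_X^a\circ H_a$ and $\alpha_{a,\Lambda}^*$, together with the identity $\alpha_{a,\Lambda}(1)=\psi_X^a(H_a\cdot 1)$ and $\alpha_{a,\Lambda}^*(1)={\rm Pr}_2(H_a)$ for the "$1$-slots." The combinatorial bookkeeping is a permanent-style identity: summing over $\sigma\in S_k$ and over the $(k-t)$ ways the "$1$" factors can be matched reproduces on both sides the same symmetric function of the single-variable matrix entries. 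I expect this combinatorial matching — carefully accounting for the $(\kappa-t)$ versus $(k-t)$ exponent on the $\alpha(1)$ factor and the $\frac{1}{k!}$ normalization so that the permanent expansions on the two sides coincide term by term — to be the main obstacle; once \eqref{eq:symadj} is in hand, composing with the $\psi_\Lambda^a$ / ${\rm Pr}_1\circ\Phi_\Lambda^a$ adjointness in the $\Lambda$-variable is routine and finishes the proof, exactly as in \cite[Lemma 4.4]{HS17}.
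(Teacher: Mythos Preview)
Your proposal is correct and is precisely the argument the paper has in mind: the paper gives no proof of its own but simply records that the lemma follows ``in a similar way to \cite[Lemma 4.4]{HS17}'', and your outline---separating the $\Lambda$-adjointness of $\psi_\Lambda^a$ versus ${\rm Pr}_1\circ\Phi_\Lambda^a$ from the symmetrized $X$-adjointness of $\alpha_{a,\Lambda}$ versus $\alpha_{a,\Lambda}^*$, then reducing the latter via the permanent expansion to the single-factor identity $\langle\psi_X^a(H_a\zeta),\zeta^*\rangle=\langle\zeta,{\rm Pr}_2(H_a\Phi_X^a\zeta^*)\rangle$---is exactly how that reference proceeds. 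There is nothing to correct.
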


As in Lemma \ref{lem3.7}, we can prove $[\beta_a]_{(k_l)}$ is also completely continuous, which means
$\det(1-[\beta_a]_{(k_l)}T)$ is well-defined, and
$$\lim_{l\rightarrow \infty}\det(1-[\beta_a]_{(k_l)}T)=\det(1-[\beta_a]_{\kappa}T).$$
From Lemma \ref{lem7} we have that
\begin{equation}\label{eq1112}
\det(1-[\beta_a^*]_{(k_l)}T)=\det(1-[\beta_a]_{(k_l)}T).
\end{equation}
That is, $\det(1-[\beta_a^*]_{(k_l)}T)$ is also well-defined. From
$\lim_{l\rightarrow \infty} [\beta_a^*]_{(k_l)}=[\beta_a^*]_{\kappa},$ we get
$$\det(1-[\beta_a^*]_{\kappa}T)=\lim_{l\rightarrow \infty}\det(1-[\beta_a^\ast]_{(k_l)}T).$$
Hence $\det(1-[\beta_a^*]_{\kappa}T)$ is also well-defined. Finally, we take limits for (\ref{eq1112}) to derive that $$\det(1-[\beta_a]_{\kappa}T)=\det(1-[\beta_a^*]_{\kappa}T).$$

Combining with Theorem \ref{thm1}, we have that
\begin{theorem}\label{thm2}
The unique unit root of the unit root L-function  $L_{unit}(f, \kappa, T)$ comes from $\det(1-[\beta_a^*]_{\kappa}T)$.
\end{theorem}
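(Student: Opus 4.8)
The statement to prove is Theorem \ref{thm2}: \emph{The unique unit root of the unit root $L$-function $L_{unit}(f,\kappa,T)$ comes from $\det(1-[\beta_a^*]_\kappa T)$.} This is an almost immediate consequence of the material already assembled in the excerpt, so the proof is a short bookkeeping argument chaining together Theorem \ref{thm1} and the identity of Fredholm determinants established just before the statement.

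The plan is as follows. First I would recall from Theorem \ref{thm1} that $L_{unit}(f,\kappa,T)^{(-1)^{s+1}}$ has exactly one unit root and that this root comes from the factor $\det(1-[\beta_a]_\kappa T)$; more precisely, from the factorization $L([\alpha_a]_\kappa,T)^{(-1)^{s+1}}=\det(1-[\beta_a]_\kappa T)^{\delta_q^s}$ together with the observation that among the factors of $L_{unit}(f,\kappa,T)$ only $L([\alpha_a]_\kappa,T)$ contributes a unit root (the higher pieces $L([\alpha_a]_{\kappa-i}\otimes\wedge^i\alpha_a,T)$ are $\tilde\pi$-divisible since $\alpha_{a,\Lambda}(\tilde\pi^{|u|}X^u)\equiv 0\bmod\tilde\pi$ for $u\neq 0$). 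Then I would invoke the displayed identity immediately preceding the statement, namely $\det(1-[\beta_a]_\kappa T)=\det(1-[\beta_a^*]_\kappa T)$, which was obtained by passing to the limit $l\to\infty$ in \eqref{eq1112}, itself a consequence of the pairing compatibility in Lemma \ref{lem7} applied to the finite symmetric power operators $[\beta_a]_{(k_l)}$ and $[\beta_a^*]_{(k_l)}$. Substituting this identity into the conclusion of Theorem \ref{thm1} gives immediately that the unique unit root of $L_{unit}(f,\kappa,T)$ is the unique unit root of $\det(1-[\beta_a^*]_\kappa T)$.

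One point that needs a word of justification: the operator $[\beta_a^*]_\kappa$ acts on $\mathcal{S}_0^*(\mathcal{O}_0^*)$ (an $R$-module), so one should check that $\det(1-[\beta_a^*]_\kappa T)$ is well-defined as a Fredholm determinant — but this was already arranged, since $[\beta_a^*]_{(k_l)}$ is completely continuous (the estimate mirrors Lemma \ref{lem3.7}), the determinants $\det(1-[\beta_a^*]_{(k_l)}T)$ converge as $l\to\infty$ by $\lim_l[\beta_a^*]_{(k_l)}=[\beta_a^*]_\kappa$, and the limit is by definition $\det(1-[\beta_a^*]_\kappa T)$. Thus the identity $\det(1-[\beta_a]_\kappa T)=\det(1-[\beta_a^*]_\kappa T)$ holds as an identity of entire power series in $T$, and in particular their (unique) unit roots coincide.

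I do not anticipate a genuine obstacle here: the theorem is a corollary packaging step, and all the analytic work — the complete continuity of the Frobenius operators, the pairing identity of Lemma \ref{lem7}, and the convergence of the finite-symmetric-power approximations — has already been carried out. The only mild subtlety is to make sure that the factor of $L_{unit}(f,\kappa,T)$ carrying the unit root is matched correctly with $\det(1-[\beta_a^*]_\kappa T)$ through the chain $L([\alpha_a]_\kappa,T)\leadsto\det(1-[\beta_a]_\kappa T)\leadsto\det(1-[\beta_a^*]_\kappa T)$, and that the sign $(-1)^{s+1}$ and the $\delta_q^s$ operation do not affect the location of the unit root (they only permute/rescale roots and poles by powers of $q$, which are not $1$-units). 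Recording these remarks yields the proof.
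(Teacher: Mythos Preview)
Your proposal is correct and follows exactly the paper's approach: the theorem is stated immediately after the identity $\det(1-[\beta_a]_\kappa T)=\det(1-[\beta_a^*]_\kappa T)$ with the phrase ``Combining with Theorem \ref{thm1}, we have that,'' so it is precisely the corollary-packaging step you describe. Your additional remarks on well-definedness of the dual determinant and the harmlessness of the $\delta_q^s$ operation for locating the unit root are accurate elaborations of points the paper leaves implicit.
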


\section{The unit root formula}
Recall that $f(\Lambda,X)=\sum a_{r,u}\Lambda^{r}X^{u}$. In this section, we treat the coefficients of $f(\Lambda, X)$ as variables. Define
$$F(Y,\Lambda,X)=\sum Y_{r,u}\Lambda^{r}X^{u}.$$
The Newton polytope of $F$ at infinity with respect to $\Lambda$ and $X$ is denoted by $\Delta_{\infty}(F)\subset\mathbb{R}^{s+n}$. Let $\delta_F$ be the cone in $\mathbb{R}^{s+n}$ over $\Delta_{\infty}(F)$ and $M(F)=\delta_F\cap \mathbb{Z}^{s+n}$.
Then we have $\delta_F\subseteq\delta_1\times \delta_2$.
For $(s,v)\in \delta_1\times \delta_2$, let $$|(s, v)|:=|s|+|v|.$$

Let $B$ be the set $\{(r,u)\in {\rm supp}(f)\}$. Let $|B|$ denote the number of elements in $B$.
Let $\mathcal{K}:=R[[Y]]$ and $\mathcal{K}_0$ be the subring of $\mathcal{K}$ 
converging in the closed unit polydisk $|Y|\le 1$. That is,

$$\mathcal{K}=\big\{\zeta(Y)=\sum_{v\in \mathbb{Z}_{\ge 0}^{|B|}}c_vY^v: c_v\in R\big\}$$
and
$$\mathcal{K}_0=\big\{\zeta(Y)=\sum_{v\in \mathbb{Z}_{\ge 0}^{|B|}}c_vY^v: c_v\in R\ {\rm and}\ c_v\rightarrow 0 {\rm\ as}\ |v|\rightarrow \infty\big\}.$$
We equip $\mathcal{K}$ and $\mathcal{K}_0$ with norm $|\zeta(Y)|=\sup_{v}\{|c_v|\}$.
For $m\ge 0$, we define the following spaces:

\begin{eqnarray*}
\mathcal{O}_{0,p^m}(\mathcal{K})&=&\big\{ \sum_{r\in M_1}c_r(Y)\tilde{\pi}^{\frac{|r|}{p^m}}\Lambda^r: c_r(Y)\in \mathcal{K},c_r(Y)\rightarrow 0\ {\rm as}\ r\rightarrow \infty\big\}, \\
\mathcal{C}^*(\mathcal{O}_{0,p^m}(\mathcal{K}))&=&\big\{ \sum_{u\in M_2}\zeta_u \tilde{\pi}^{-|u|}X^{-u}:\zeta_u\in \mathcal{O}_{0,p^m}(\mathcal{K})\big\},\\
\mathcal{O}^*_{0,p^m}(\mathcal{K})&=&\big\{ \sum_{r\in M_1}c_r^*(Y)\tilde{\pi}^{-{\frac{|r|}{p^m}}}\Lambda^{-r}: c_r^*(Y)\in \mathcal{K}\big\}, \\
\mathcal{C}^*(\mathcal{O}^*_{0,p^m}(\mathcal{K}))&=&\big\{ \sum_{u\in M_2}\zeta_u^* \tilde{\pi}^{-|u|}X^{-u}:\zeta^\ast_u\in \mathcal{O}^*_{0,p^m}(\mathcal{K})\big\},\\
\mathcal{S}_0^*(\mathcal{O}_{0,p^m}(\mathcal{K}))&=&\big\{ \sum_{\mathbf{u}\in \mathcal{S}(M_2)}\zeta_{\mathbf{u}} e_{\mathbf{u}}^*:\zeta_{\mathbf{u}}\in \mathcal{O}_{0,p^m}(\mathcal{K})\big\},\\
\mathcal{S}_0^*(\mathcal{O}^*_{0,p^m}(\mathcal{K}))&=&\big\{ \sum_{\mathbf{u}\in \mathcal{S}(M_2)}\zeta_{\mathbf{u}}^* e_{\mathbf{u}}^*:\zeta^\ast_{\mathbf{u}}\in \mathcal{O}^\ast_{0,p^m}(\mathcal{K})\big\}.
\end{eqnarray*}
All these spaces with coefficients in $\mathcal K$ generalize the spaces we defined before and can be defined when replacing $\mathcal K$ by $\mathcal K_0$. We omit $p^m$ in the notations when $m=0$.  Denote by $\Upsilon$ the injection map from $\mathcal{C}^*(\mathcal{O}_{0}(\mathcal{K}))$ to
$\mathcal{S}^*(\mathcal{O}_{0}(\mathcal{K}))$ defined by
$\Upsilon(\tilde{\pi}^{-|u|}X^{-u})=e_{u}^*$ for $u\in \delta_2\setminus\{0\}$ and $\Upsilon(1)=1$.

For $m\ge 1$, let
$$H_m(Y,\Lambda,X)=\exp\big(F(Y,\Lambda,X)-F(Y^{p^m}, \Lambda^{p^m}, X^{p^m})\big).$$
As in Lemma \ref{lem01}, one verifies that
$$H_m(Y,\Lambda,X)=\sum_{(s,v)\in M(F)}B_{s,v}(Y)\tilde{\pi}^{\frac{|s|+|v|}{p^{m-1}}}\Lambda^sX^v$$
with $B_{s,v}(Y)\in \mathcal K_0$.
Define $$\alpha_{m,Y,\Lambda}^*:={\rm Pr}_2\circ H_m(Y,\Lambda,X)\circ \Phi_X^m .$$
 By the same argument as Lemma \ref{lem4} we have that $\alpha_{m,Y,\Lambda}^*$ maps $\mathcal{C}^*(\mathcal{O}_{0,p^m}(\mathcal{K}_0))$ to $\mathcal{C}^*(\mathcal{O}_{0,p^m}(\mathcal{K}_0))$. For $\kappa\in \mathbb{Z}_p$, we extend $\alpha_{m,Y,\Lambda}^*$ to a map $$[\alpha_{m,Y,\Lambda}^*]_{\kappa}:\mathcal{S}_0^*(\mathcal{O}_{0,p^m}(\mathcal{K}_0))\rightarrow \mathcal{S}_0^*(\mathcal{O}_{0,p^m}(\mathcal{K}_0))$$ by the same way as (\ref{eq0923}).
When $m=a$, a similar argument as Lemma \ref{lem5}, we have that $$[\beta_{a,Y}^*]_{\kappa}:= {\rm Pr}_1\circ [\alpha_{a,Y,\Lambda}^*]_{\kappa}\circ \Phi_{\Lambda}^a$$ is an endormorphism of $\mathcal{S}_0^*(\mathcal{O}_0^*(\mathcal{K}_0))$.

Define the projection map ${\rm Pr}_0$ on the power series ring with variables $\Lambda$ and $X$ by
$${\rm Pr}_0(\sum_{(r,u)\in \mathbb{Z}^{s}\times \mathbb{Z}^n}a_{r,u}\Lambda^rX^u):=\sum_{(r,u)\in M_0(\delta_1)\times M_0(\delta_2)}a_{r,u}\Lambda^rX^u, $$
where $M_0(\delta_1)=M_1\cap -M_1$ and $M_0(\delta_2)=M_2\cap -M_2$.

Write $${\rm Pr}_0\big(\exp \pi F(Y,\Lambda,X)\big)=\sum_{(s,v)\in M_0(\delta_1)\times M_0(\delta_2)}J_{s,v}(Y)\Lambda^sX^{v}.$$
We have $J_{s,v}(Y)\in R[[Y]]$ and $J_{0,0}(Y)\in 1+Y\cdot R[[Y]]$.
Since $J_{0,0}(Y)$ is invertible in $\mathcal{K}$, we let
$$\eta(Y,\Lambda,X):=\frac{{\rm Pr}_0\big(\exp \pi F(Y,\Lambda,X)\big)}{J_{0,0}(Y)}.$$

By a similar argument to \cite[Corollary 2.17, Corollary 2.18]{AS2}, we have $J_{s,v}(Y)/J_{0,0}(Y)\in \mathcal{K}_0$ for each $(s,v)\in M_0(\delta_1)\times M_0(\delta_2)$
 and $J_{0,0}(Y)/J_{0,0}(Y^p)\in \mathcal{K}_0$.
\begin{lemma}\label{lem1212}
For any $p$-adic integer $\kappa$, we have that $$\Upsilon\big(\eta(Y,\Lambda,X)\big)^{\kappa}\in \mathcal{S}_0^*(\mathcal{O}_0^*(\mathcal{K}_0)).$$
\end{lemma}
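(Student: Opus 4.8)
The plan is to reduce the claim to the convergence properties of the individual building blocks of $\eta(Y,\Lambda,X)$ that have already been recorded, and then to control the $\kappa$-th power by a $p$-adic limiting argument. First I would observe that $\eta(Y,\Lambda,X)$ is a product: writing ${\rm Pr}_0(\exp\pi F) = J_{0,0}(Y)\cdot\bigl(1+\sum_{(s,v)\neq(0,0)}(J_{s,v}(Y)/J_{0,0}(Y))\Lambda^sX^v\bigr)$, we have $\eta = 1 + \eta_1$ where $\eta_1 := \sum_{(s,v)\in M_0(\delta_1)\times M_0(\delta_2),\,(s,v)\neq(0,0)} (J_{s,v}(Y)/J_{0,0}(Y))\,\Lambda^sX^v$. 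By the quoted analogue of \cite[Corollary 2.17, Corollary 2.18]{AS2}, each coefficient $J_{s,v}(Y)/J_{0,0}(Y)$ lies in $\mathcal{K}_0$, and moreover the corresponding estimates (coming from ${\rm ord}_\pi$ bounds on the Teichm\"uller expansion of $\exp\pi F$, exactly as in Lemma~\ref{lem01}) show that $J_{s,v}(Y)/J_{0,0}(Y)$ has $p$-adic valuation growing linearly in $|(s,v)|$, hence $\eta_1$ has norm $<1$ and its $X$-part, after applying $\Upsilon$, lands in $\mathcal{S}_0^*(\mathcal{O}_0^*(\mathcal{K}_0))$ with the right decay. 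In particular $|\Upsilon(\eta_1)|<1$, so $\Upsilon(\eta)^\kappa = (1+\Upsilon(\eta_1))^\kappa$ is well-defined in $\mathcal{S}^*(\mathcal{O}_0^*(\mathcal{K}))$ for every $\kappa\in\mathbb{Z}_p$ by the binomial series.

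Next I would prove membership in the convergent subspace $\mathcal{S}_0^*(\mathcal{O}_0^*(\mathcal{K}_0))$ for the case $\kappa = k$ a nonnegative integer, where $\Upsilon(\eta)^k = (1+\Upsilon(\eta_1))^k = \sum_{j=0}^{k}\binom{k}{j}\Upsilon(\eta_1)^j$ is a finite product of copies of $\Upsilon(\eta_1)$. Since $\mathcal{S}_0^*(\mathcal{O}_0^*(\mathcal{K}_0))$ is a Banach algebra under the sup-norm and $\Upsilon(\eta_1)$ lies in it (with the monomial $e_v^*$ attached to each $\Lambda^sX^v$-term carrying the decay in $|v|$, and the $\tilde\pi^{-|s|}\Lambda^{-s}$ structure absorbing the $\Lambda$-part after the appropriate identification, using $J_{0,0}(Y)/J_{0,0}(Y^p)\in\mathcal{K}_0$ to handle the division that defines the $\mathcal{O}_0^*$-structure), each power $\Upsilon(\eta_1)^j$ lies in it, and so does the finite sum. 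This settles all $\kappa\in\mathbb{Z}_{\ge 0}$.

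Finally, for general $\kappa\in\mathbb{Z}_p$ I would choose a sequence of nonnegative integers $k_l\to\kappa$ $p$-adically. Writing $\kappa = k_l + \tilde\pi^{\tau(l)}\sigma_l$ with $\tau(l)\to\infty$ and $\sigma_l\in\mathbb{Z}_p$, the identity $\Upsilon(\eta)^\kappa - \Upsilon(\eta)^{k_l} = \Upsilon(\eta)^{k_l}\bigl(\Upsilon(\eta)^{\tilde\pi^{\tau(l)}\sigma_l}-1\bigr)$ together with $|\Upsilon(\eta_1)|<1$ and the binomial estimate $|(1+\Upsilon(\eta_1))^{\tilde\pi^{\tau(l)}\sigma_l}-1|\le|\tilde\pi|^{\tau(l)}$ shows $\Upsilon(\eta)^{k_l}\to\Upsilon(\eta)^\kappa$ in the sup-norm. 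Since each $\Upsilon(\eta)^{k_l}\in\mathcal{S}_0^*(\mathcal{O}_0^*(\mathcal{K}_0))$ and this space is closed (it is the Banach subspace of $\mathcal{S}^*(\mathcal{O}_0^*(\mathcal{K}))$ cut out by the convergence conditions $c_v\to 0$), the limit $\Upsilon(\eta)^\kappa$ also lies in $\mathcal{S}_0^*(\mathcal{O}_0^*(\mathcal{K}_0))$, which is the claim. The main obstacle I anticipate is bookkeeping rather than conceptual: one must check carefully that the decay of the coefficients $J_{s,v}(Y)/J_{0,0}(Y)$ in $|(s,v)|$ is genuinely linear (so that it survives division by $J_{0,0}$ and multiplication by $1/J_{0,0}(Y^p)$, and so that taking powers in the Banach algebra does not destroy the decay in $|v|$ that is needed to live in $\mathcal{S}_0^*$ rather than merely $\mathcal{S}^*$), and that the $\Lambda$-exponents land in $M_0(\delta_1)$ so that the $\mathcal{O}_0^*$-structure with $\tilde\pi^{-|s|/p^m}$ weights is the correct target; both follow from the cited Adolphson--Sperber estimates applied to $\exp\pi F$, but the exponent arithmetic must be done with care, as in Lemmas~\ref{lem2} and \ref{lem4}.
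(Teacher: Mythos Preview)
Your approach has a real gap. The claim that $\mathcal{S}_0^*(\mathcal{O}_0^*(\mathcal{K}_0))$ is a Banach algebra under the sup-norm is not set up in the paper (the starred $\mathcal{O}_0^*$-spaces are introduced only as $R$-modules) and in fact fails whenever $M_0(\delta_1)=M_1\cap(-M_1)$ is nontrivial. The weight $\tilde\pi^{-|r|}$ on $\Lambda^{-r}$ in $\mathcal{O}_0^*$ forces the $\Lambda^{-t}$-coefficient of a product to carry factors $\tilde\pi^{|t|-|r|-|s|}$ coming from decompositions $r+s=t$; since $|t|<|r|+|s|$ whenever $r,s\in M_0(\delta_1)$ partially cancel, these factors have norm $>1$, so multiplication is not norm-contracting, and already $\Upsilon(\eta_1)^2$ involves, for fixed $t$, an infinite sum over $r+s=t$ whose terms do not tend to $0$ in the sup-norm of $\mathcal{K}_0$. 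Your proposed remedy---linear growth of $\mathrm{ord}_p\big(J_{s,v}/J_{0,0}\big)$ in $|(s,v)|$ ``exactly as in Lemma~\ref{lem01}''---does not go through: that lemma treats $H(X)=\prod_u\Theta(\hat\lambda_uX^u)$ built from the Artin--Hasse series $\Theta(T)=\exp\pi(T-T^p)$, whose coefficients $\theta_i$ genuinely decay like $\mathrm{ord}_p\theta_i\ge(p-1)i/p^2$. By contrast $\exp\pi F$ is a product of plain exponentials $\exp(\pi Y_{r,u}\Lambda^rX^u)$, and $\mathrm{ord}_p(\pi^i/i!)$ stays bounded (it equals $1/(p-1)$ whenever $i$ is a power of $p$), so no such decay is available; the only input one has is $c_{r,u}(Y)\in\mathcal{K}_0$.

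The paper's argument is different and sidesteps all of this. It expands the binomial series for arbitrary $\kappa$ in one stroke and asserts that the coefficient $D_{r,\mathbf{u}}(Y)$ of $\Lambda^r e_{\mathbf{u}}^*$ is a well-defined element of $\mathcal{K}_0$. The convergence behind this is $Y$-adic, not $p$-adic: since $J_{r,u}(Y)$ has zero constant term for $(r,u)\neq(0,0)$, every $c_{r,u}(Y)$ lies in the ideal $Y\cdot R[[Y]]$, so a product of $l$ such factors has $Y$-order at least $l$, and for each fixed monomial $Y^w$ only finitely many $l$ and finitely many $\Lambda$-decompositions contribute. Once $D_{r,\mathbf{u}}(Y)\in\mathcal{K}_0$, the symmetry $r\in M_0(\delta_1)=-M_0(\delta_1)$ lets one rewrite $\Lambda^r=\tilde\pi^{|r|}\cdot(\tilde\pi^{-|r|}\Lambda^{-(-r)})$ and read off membership in $\mathcal{S}_0^*(\mathcal{O}_0^*(\mathcal{K}_0))$ directly. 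No Banach-algebra structure, no estimate $|\Upsilon(\eta_1)|<1$, and no limit over integers $k_l\to\kappa$ are used.
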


\begin{proof}
Write
$$\eta(Y,\Lambda,X)=1+\sum_{(r,u)\neq (0,0)}c_{r,u}(Y)\Lambda^rX^u.$$
We have proved that 
$c_{r,u}(Y)\in \mathcal{K}_0$. Since $u\in M_0(\delta_2)$, we can write
$$\eta(Y,\Lambda,X)=1+\sum_{(r,u)\neq(0,0)}c_{r,-u}(Y)\tilde{\pi}^{|u|}\Lambda^r\tilde{\pi}^{-|u|}X^{-u}.$$
Then
$$\Upsilon\big(\eta(Y,\Lambda,X)\big)=1+\sum_{(r,u)\neq(0,0)}c_{r,-u}(Y)\tilde{\pi}^{|u|}\Lambda^re_{u}^*.$$
For any $p$-adic integer $\kappa$, we have that
\begin{eqnarray*}
\Upsilon\big(\eta(Y,\Lambda,X)\big)^{\kappa}=\sum_{l=0}^{\infty}\binom{\kappa}{l}\bigg(\sum_{(r,u)\neq(0,0)}c_{r,-u}(Y)\tilde{\pi}^{|u|}\Lambda^re_{u}^* \bigg)^l
=\sum_{r\in M_0(\delta_1),\mathbf{u}\in \mathcal{S}(\mathcal{M})}D_{r,\mathbf{u}}(Y)\Lambda^re_{\mathbf{u}}^*,
\end{eqnarray*}
where $D_{r,\mathbf{u}}(Y)\in \mathcal{K}_0$ is well-defined. Since $r\in M_0(\delta_1)$, we rewrite $\Upsilon\big(\eta(Y,\Lambda,X)\big)^{\kappa}$ as
$$\Upsilon\big(\eta(Y,\Lambda,X)\big)^{\kappa}
=\sum_{r\in M_0(\delta_1),\mathbf{u}\in \mathcal{S}(\mathcal{M})}D_{-r,\mathbf{u}}(Y)\tilde{\pi}^{|r|}(\tilde{\pi}^{-|r|}\Lambda^{-r})e_{\mathbf{u}}^*.$$
We see that $\Upsilon\big(\eta(Y,\Lambda,X)\big)^{\kappa}\in \mathcal{S}_0^*(\mathcal{O}_0^*(\mathcal{K}_0)).$
This finishes the proof of Lemma \ref{lem1212}.

\end{proof}

 Let $\mathcal{F}(Y):=J_{0,0}(Y)/J_{0,0}(Y^p)$ and $\mathcal{F}_m(Y):=\prod_{i=0}^{m-1}\mathcal{F}(Y^{p^i})$ for $m\ge 1$.

\begin{theorem}\label{thm01}
Let $\hat{a}_{r,u}$ be the Teichm\"{u}ller lift of $a_{r,u}$ in ${\mathbb Q}_q$, $\hat{a}=(\hat{a}_{r,u})$ and $\mathcal{F}(\hat{a})=\mathcal{F}(Y)|_{Y=\hat{a}}$. Then
$\mathcal{F}_a(\hat{a})^{\kappa}=\big(\mathcal{F}(\hat{a})\mathcal{F}(\hat{a}^p)\cdots \mathcal{F}(\hat{a}^{p^{a-1}})\big)^{\kappa}$ is the unique unit root of $L_{unit}(f,\kappa,T)^{(-1)^{s+1}}$
at $Y_{r,u}=a_{r,u}$.
\end{theorem}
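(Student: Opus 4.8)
The plan is to combine the reduction already accomplished in Theorems~\ref{thm1} and~\ref{thm2} with an explicit construction of the unit-root eigenvector of the dual Frobenius $[\beta_a^*]_\kappa$, in the spirit of the analysis Adolphson and Sperber carried out for the classical case in \cite{AS2}. By Theorem~\ref{thm1} the unique unit root of $L_{unit}(f,\kappa,T)^{(-1)^{s+1}}$ comes from $\det(1-[\beta_a]_\kappa T)$, and by the identity $\det(1-[\beta_a]_\kappa T)=\det(1-[\beta_a^*]_\kappa T)$ of Section~4 together with Theorem~\ref{thm2} this is the unique reciprocal root of $\det(1-[\beta_a^*]_\kappa T)$ that is a $p$-adic $1$-unit. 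Here $[\beta_a^*]_\kappa$ is the operator of Section~4 attached to $f$; since $\hat a^{p^a}=\hat a$ it coincides with the specialization at $Y=\hat a$ of the family operator $[\beta_{a,Y}^*]_\kappa$ on $\mathcal S_0^*(\mathcal O_0^*(\mathcal K_0))$ constructed in Section~5. Because $J_{0,0}(Y)=g_{0,0}(Y)$ we have $\mathcal F(Y)=J_{0,0}(Y)/J_{0,0}(Y^p)=G(Y)$, so Theorem~\ref{thm01}, and with it Conjecture~\ref{conj}, follows once this $1$-unit reciprocal root is shown to equal $\mathcal F_a(\hat a)^{\kappa}$.

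I would first record, from the estimates in Lemmas~\ref{lem3} and~\ref{lem5} (together with Lemma~\ref{lem4}), that on its natural $R$-orthonormal basis indexed by $(r,\mathbf u)\in M_1\times\mathcal S(M_2)$ the operator $[\beta_a^*]_\kappa$ reduces modulo $\tilde\pi$ to the rank-one projection sending the basis vector $1$ (i.e.\ $(r,\mathbf u)=(0,\emptyset)$) to itself and annihilating all other basis vectors. Hence $\det(1-[\beta_a^*]_\kappa T)\equiv 1-T\pmod{\tilde\pi}$, so it has exactly one $1$-unit reciprocal root, and it suffices to exhibit a single eigenvector of $[\beta_a^*]_\kappa$ with $1$-unit eigenvalue and to identify that eigenvalue as $\mathcal F_a(\hat a)^\kappa$ (which is a $1$-unit, since $\mathcal F(Y)\in 1+\tilde\pi\mathcal K_0$).

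The eigenvector is $\xi^*_\kappa:=\Upsilon\big(\eta(Y,\Lambda,X)\big)^{\kappa}$ with $\eta={\rm Pr}_0(\exp\pi F)/J_{0,0}(Y)$; it lies in $\mathcal S_0^*(\mathcal O_0^*(\mathcal K_0))$ by Lemma~\ref{lem1212}, and, by the Adolphson--Sperber convergence facts ($J_{s,v}/J_{0,0}\in\mathcal K_0$ and $J_{0,0}(Y)/J_{0,0}(Y^p)\in\mathcal K_0$) recalled before that lemma, it specializes legitimately at $Y=\hat a$ even though $J_{0,0}$ itself need not converge there. The heart of the argument is the cocycle property of the splitting function: writing $\Phi$ for the simultaneous Frobenius $Y\mapsto Y^p,\ \Lambda\mapsto\Lambda^p,\ X\mapsto X^p$, one has $H_m=\exp\pi F/\Phi^m(\exp\pi F)$ and $H_a=\prod_{i=0}^{a-1}\Phi^i(H_1)$, so at $Y=\hat a$ --- where $\Phi_X^a\Phi_\Lambda^a$ acts on the input as $\Phi^a$ because $\hat a^{p^a}=\hat a$, and $F(\hat a,\Lambda,X)=\hat f(\Lambda,X)$ --- the formal series $\exp\pi\hat f(\Lambda,X)$ is a fixed vector of $H_a(\hat a,\Lambda,X)\circ\Phi_X^a\circ\Phi_\Lambda^a$. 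I would then transfer this to an honest eigenrelation for $\xi^*_\kappa|_{Y=\hat a}$ by: (i) replacing $\exp\pi F$ by its restriction ${\rm Pr}_0(\exp\pi F)$ to the lineality sublattice $M_0(\delta_1)\times M_0(\delta_2)$ --- which is $\Phi$-stable and is exactly the part surviving the projections ${\rm Pr}_1,{\rm Pr}_2$ occurring inside $[\beta_a^*]_\kappa$ --- and normalizing by $J_{0,0}$; (ii) tracking the $\Lambda^0X^0$ coefficient through the $a$ factors $\Phi^i(H_1)$ of $H_a$, so that after the normalization the $i$-th step contributes multiplication by $\mathcal F(Y^{p^i})=g_{0,0}(Y^{p^i})/g_{0,0}(Y^{p^{i+1}})$, and the eigenvalue is the convergent product $\prod_{i=0}^{a-1}\mathcal F(\hat a^{p^i})=\mathcal F_a(\hat a)$ --- which may not be simplified further, since $g_{0,0}$ alone does not converge at $\hat a$; (iii) raising to the $\kappa$th symmetric power through the definition of $[\alpha_{a,Y,\Lambda}^*]_\kappa$, first for positive integers $\kappa=k$ and then by the $p$-adic continuation of Lemma~\ref{lem6}, so that the eigenvalue becomes $\mathcal F_a(\hat a)^\kappa$.

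The main obstacle is steps (i)--(ii): controlling, modulo $\tilde\pi$, the terms of $\exp\pi F$ lying outside the lineality and the discrepancy $Y-Y^{p^a}$ that vanishes only under Teichm\"uller specialization, and checking that neither the non-lineality contributions to the product $H_a\cdot(\cdot)$ nor the projections ${\rm Pr}_0,{\rm Pr}_1,{\rm Pr}_2$ disturb the $1$-unit eigenvalue; equivalently, that the unit-root part of $[\beta_a^*]_\kappa$ is governed entirely by the lineality restriction $\eta={\rm Pr}_0(\exp\pi F)/J_{0,0}$ of the formal Dwork solution $\exp\pi F$. This is precisely where the Adolphson--Sperber theory of $\mathcal A$-hypergeometric series is needed, and where working over $\mathcal K_0$ (power series convergent on the closed polydisk $|Y|\le 1$) rather than over $\mathcal K$ is essential, since it is what makes specialization at the boundary point $Y=\hat a$ legitimate. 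Once the eigenrelation $[\beta_a^*]_\kappa\big(\xi^*_\kappa|_{Y=\hat a}\big)=\mathcal F_a(\hat a)^\kappa\,\xi^*_\kappa|_{Y=\hat a}$ is established, the uniqueness of the $1$-unit reciprocal root identifies $\mathcal F_a(\hat a)^\kappa$ as the unit root, completing the proof.
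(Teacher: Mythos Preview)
Your proposal is correct and follows essentially the same route as the paper: exhibit $\Upsilon(\eta)^\kappa$ as an eigenvector of $[\beta_a^*]_\kappa$ with $1$-unit eigenvalue $\mathcal F_a(\hat a)^\kappa$, then invoke Theorem~\ref{thm2} and the mod-$\tilde\pi$ uniqueness. The paper carries out your steps (i)--(ii) by a direct computation at the family level, showing $\alpha_{a,Y,\Lambda}^*\big(\eta(Y^q,\Lambda^q,X)\big)=\mathcal F_a(Y)\big(\eta(Y,\Lambda,X)+\omega\big)$ with all $\Lambda$-exponents of $\omega$ outside $M_0(\delta_1)$, so that ${\rm Pr}_1$ annihilates every cross-term $\eta^{\kappa-l}\omega^l$ with $l\ge1$ after raising to the $\kappa$th power; only then does it specialize at $Y=\hat a$, which handles your ``discrepancy $Y-Y^{p^a}$'' automatically. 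One small simplification: the paper does not need your step~(iii) limit via Lemma~\ref{lem6}, since the binomial expansion of $(\eta+\omega)^\kappa$ and the action of $[\alpha_{a,Y,\Lambda}^*]_\kappa$ on $\Upsilon(\eta)^\kappa$ are written directly for arbitrary $\kappa\in\mathbb Z_p$.
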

\begin{proof}

We have that
 \begin{align*}
 \alpha_{1,Y,\Lambda}^*\big(\eta(Y^p,\Lambda^p,X)\big)&={\rm Pr}_2\Big( H(Y,\Lambda,X)\circ \frac{{\rm Pr}_0\big(\exp \pi F(Y^p,\Lambda^p,X^p)\big)}{J_{0,0}(Y^p)}\Big)\\
 &={\rm Pr}_2\Big( H(Y,\Lambda,X)\big(\frac{\exp \pi F(Y^p,\Lambda^p,X^p)}{J_{0,0}(Y^p)} +\omega'(Y,\Lambda,X)+\epsilon(Y,\Lambda,X)\big)\Big)\\
 &={\rm Pr}_2\Big(\frac{\exp \pi F(Y,\Lambda,X)}{J_{0,0}(Y^)}+H(Y,\Lambda,X)\big(\omega'(Y,\Lambda,X)+\epsilon(Y,\Lambda,X)\big) \Big),
 \end{align*}
where $\Lambda^rX^u$ appearing in $\omega'(Y,\Lambda,X)$ has $r$ in $\delta_1\setminus M_0(\delta_1)$ and $\Lambda^rX^u$ appearing in $\epsilon(Y,\Lambda,X)$ has $u$ in $\delta_2\setminus M_0(\delta_2)$.

Let ${\rm Pr}_{2,0}$ be the projection of exponents of $X$ onto $M_0(\delta_2)$, that is
$${\rm Pr}_{2,0}(\sum_{r,u}a_{r,u}\Lambda^rX^u):=\sum_{u\in M_0(\delta_2)}a_{r,u}\Lambda^rX^u. $$
Recall that ${\rm Pr}_2$ is the projection of exponents of $X$ onto $-M_2$, and all exponents of $X$ in $H(Y,\Lambda,X)$ and $\epsilon(Y,\Lambda,X)$ lie in $M_2$. Hence we have
$${\rm Pr}_2\big(H(Y,\Lambda,X)\epsilon(Y,\Lambda,X)\big)={\rm Pr}_{2,0}\big(H(Y,\Lambda,X)\epsilon(Y,\Lambda,X)\big)={\rm Pr}_{2,0}\big(H(Y,\Lambda,X)\big)\cdot{\rm Pr}_{2,0}\big(\epsilon(Y,\Lambda,X)\big)=0.$$
By $\mathcal{F}(Y)=J_{0,0}(Y)/J_{0,0}(Y^p)$, we have that
\begin{align*}
\alpha_{1,Y,\Lambda}^*\big(\eta(Y^p,\Lambda^p,X)\big)&=\mathcal{F}(Y)\Big({\rm Pr}_2\Big(\frac{\exp \pi F(Y,\Lambda,X)}{J_{0,0}(Y)}\Big)+\frac{1}{\mathcal{F}(Y)}{\rm Pr}_2(H(Y,\Lambda,X)\cdot \omega'(Y,\Lambda,X))\Big)\\
 &=\mathcal{F}(Y)\big(\eta(Y,\Lambda,X)+\omega^*(Y,\Lambda,X)\big),
 \end{align*}
where $\Lambda^rX^u$ appearing in $\omega^*(Y,\Lambda,X)$ has $r$ in $\delta_1\setminus M_0(\delta_1)$.
One verifies $$\alpha_{a,Y,\Lambda}^*=\alpha_{1,Y,\Lambda}^\ast \circ \alpha_{1,Y^p,\Lambda^p}^\ast\circ\cdots \circ\alpha_{1,Y^{p^{a-1}},\Lambda^{p^{a-1}}}^\ast.$$
 By repeating the above process, we have that
\begin{equation}\label{map}
\alpha_{a,Y,\Lambda}^*\big(\eta(Y^q,\Lambda^q,X)\big)=\mathcal{F}_a(Y)\big(\eta(Y,\Lambda,X)+\omega(Y,\Lambda,X)\big),
\end{equation}
where $\Lambda^rX^u$ appearing in $\omega(Y,\Lambda,X)$ has $r$ lie in $\delta_1\setminus M_0(\delta_1)$. For any $l\ge 1$, one can see that $\Lambda^rX^u$ appearing $\eta^{\kappa-l}\omega^l$ also satisfies that $r\in \delta_1\setminus M_0(\delta_1)$.
Then
\begin{equation}\label{maps}
{\rm Pr}_1\big(\Upsilon(\eta(Y,\Lambda,X))+\Upsilon(\omega(Y,\Lambda,X))\big)^{\kappa}= {\rm Pr}_1\Big(\sum_{l=0}^{\infty}\binom{\kappa}{l}\Upsilon\big(\eta(Y,\Lambda,X)\big)^{\kappa-l}\Upsilon\big(\omega(Y,\Lambda,X)\big)^l\Big)=\Upsilon\big(\eta(Y,\Lambda,X)\big)^{\kappa}.
\end{equation}
Write $\eta(Y,\Lambda,X)=1+h(Y,\Lambda,X)$.

It follows from  Lemma \ref{lem1212} that
$\Upsilon\big(\eta(Y,\Lambda,X)\big )^{\kappa}\in \mathcal{S}_0^*(\mathcal{O}_0^*(\mathcal{K}_0))$. Combining with  (\ref{map}) and (\ref{maps}), we have

 \begin{align*}
 [\beta_{a,Y}^*]_{\kappa}\Big(\Upsilon\big(\eta(Y^q,\Lambda,X)\big)^{\kappa}\Big)&={\rm Pr}_1\circ [\alpha_{a,Y,\Lambda}^*]_{\kappa}\circ \Phi_{\Lambda}^a\Big(\Upsilon\big(\eta(Y^q,\Lambda,X)\big)^{\kappa}\Big)\\
 &={\rm Pr}_1\circ [\alpha_{a,Y,\Lambda}^*]_{\kappa}\Big(\Upsilon\big(\eta(Y^q,\Lambda^q,X)\big)^{\kappa}\Big)\\
 &={\rm Pr}_1\circ [\alpha_{a,Y,\Lambda}^*]_{\kappa}\Big(\sum_{l=0}^{\infty}\binom{\kappa}{l}\Upsilon\big(h(Y^q,\Lambda^q,X)\big)^l \Big)\\
 &={\rm Pr}_1\circ \Big(\sum_{l=0}^{\infty}\binom{\kappa}{l}\Upsilon\big(\alpha_{a,Y,\Lambda}^*(1)\big)^{\kappa-l}\Upsilon\big(\alpha_{a,Y,\Lambda}^*(h(Y^q,\Lambda^q,X))\big)^l \Big)\\
 &={\rm Pr}_1\circ \Big(\Upsilon\big(\alpha_{a,Y,\Lambda}^*(1)\big)+\Upsilon\big(\alpha_{a,Y,\Lambda}^*(h(Y^q,\Lambda^q,X))\big) \Big)^{\kappa}\\
 &={\rm Pr}_1\circ\Upsilon\Big( \alpha_{a,Y,\Lambda}^*\big(\eta(Y^q,\Lambda^q,X)\big)\Big)^{\kappa}\\
 &={\rm Pr}_1\circ\Upsilon \Big(\mathcal{F}_a(Y)\big(\eta(Y,\Lambda,X)+\omega(Y,\Lambda,X)\big)\Big)^{\kappa}\\
 &=\mathcal{F}_a(Y)^{\kappa}\cdot{\rm Pr}_1 \circ\Upsilon \big(\eta(Y,\Lambda,X)+\omega(Y,\Lambda,X)\big)^{\kappa}\\
 &=\mathcal{F}_a(Y)^{\kappa}\cdot \Upsilon\big(\eta(Y,\Lambda,X)\big)^{\kappa}.
 \end{align*}

Since the coefficients of $\eta(Y,\Lambda,X)$ with respect to $\Lambda$ and $X$ belong to $\mathcal{K}_0$, and $ \mathcal{F}_a(Y)\in \mathcal{K}_0$, we can specialize the above equality by setting $Y_{r,u}=\hat{a}_{r,u}$.
We then obtain
$$[\beta_a^*]_{\kappa}\Big(\Upsilon\big(\eta(\hat{a},\Lambda,X)\big)^{\kappa}\Big)=\mathcal{F}_a(\hat{a})^{\kappa}\cdot \Upsilon\big(\eta(\hat{a},\Lambda,X)\big)^{\kappa}.$$
We observe that $\mathcal{F}_a(\hat{a})^{\kappa}=\big(\mathcal{F}(\hat{a})\mathcal{F}(\hat{a}^p)\cdots \mathcal{F}(\hat{a}^{p^{a-1}})\big)^{\kappa}$ is the characteristic value of $[\beta_a^*]_{\kappa}$.
Therefore, by Theorem \ref{thm2}, we conclude that
the unique unit root of the unit root L-function  $L_{unit}(f, \kappa, T)$ is $\mathcal{F}_a(\hat{a})^{\kappa}$.
This completes the proof of Theorem \ref{thm01}.
\end{proof}

\begin{center}
{\sc Acknowledgements}
\end{center}
H. Zhang is supported by National Natural Science Foundation of China (Grant No. 12171261, Grant No. 12301012), and by the Natural Science Foundation of Jiangsu Province (Grant No. BK20230802).
L.P. Yang is supported by National Natural Science Foundation of China (Grant No. 12201078).

\end{document}